\newtheoremstyle{break}%
{}{}%
{\itshape}{}%
{\bfseries}{}
{\newline}{}
\theoremstyle{plain}
\newtheorem{thm}{Theorem}[section]
\newtheorem{lem}[thm]{Lemma}
\newtheorem{prop}[thm]{Proposition}
\theoremstyle{definition}
\newtheorem{ex}[thm]{Example}
\newtheorem{ass}[thm]{Assumption}
\theoremstyle{remark}
\theoremstyle{break}
\newtheorem{alg}[thm]{Algorithm}
\newcommand{\prox}{\operatorname{prox}}
\newcommand{\R}{\mathbb{R}}
\newcommand{\pred}{\operatorname{pred}}
\newcommand{\ared}{\operatorname{ared}}
\newcommand{\dist}{\operatorname{dist}}
\providecommand{\keywords}[1]
{
	\small	
	\noindent
	\textbf{\textit{Keywords---}} #1
}
\providecommand{\ams}[1]
{
	\small	
	\noindent
	\textbf{\textit{AMS Subject Classifications---}} #1
}
\begin{document}
\title{Efficient Regularized Proximal Quasi\-/Newton Methods for Large-Scale Nonconvex Composite Optimization 
Problems}
\author{
    Christian Kanzow%
    \thanks{University of W\"urzburg, Institute of Mathematics,
        Emil-Fischer-Str.\ 30, 97074 W\"urzburg, Germany; kanzow@mathematik.uni-wuerzburg.de,
        theresa.lechner2@mathematik.uni-wuerzburg.de}
    \and
    Theresa Lechner\footnotemark[1]
}

\date{January 10, 2022}
\maketitle

\begin{abstract}
	\small 
	\noindent Optimization problems with composite functions consist of an objective
	function which is the sum of a smooth and a (convex) nonsmooth term.
	This particular structure is exploited by the class of proximal
	gradient methods and some of their generalizations like 
	proximal Newton and quasi-Newton methods. 
	In this paper, we propose a regularized proximal quasi-Newton method whose
        main features are: (a) the method is globally convergent to stationary points,
        (b) the globalization is controlled by a regularization parameter, no line search is required,
        (c) the method can be implemented very efficiently based on a simple observation
        which combines recent ideas for the computation of quasi-Newton
        proximity operators and compact representations of limited-memory quasi-Newton updates.
        Numerical examples for the solution of convex and nonconvex composite optimization
        problems indicate that the method outperforms several existing methods.
\end{abstract}

\keywords{Composite minimization, Regularization, Quadratic approximation, Proximal Quasi-Newton method, Global convergence,  Limited memory methods, Proximity Operator, Local Error Bound}

\ams{49M15, 49M37, 65K05, 65K10, 90C06, 90C26, 90C30, 90C53}
\normalsize

\section{Introduction}

We consider the problem 
\begin{equation} \label{eq:problem}
	\min_x \psi(x):= f(x)+\varphi(x), 
\end{equation}
where $f:\mathbb{R}^n\to\mathbb{R}$ is continuously differentiable and $\varphi:\mathbb{R}^n\to\mathbb{R}$ is convex. In this formulation, the objective function $\psi$ is neither smooth nor convex, so a wide class of problems is covered, including problems in machine learning, compressed sensing, signal processing, and statistics. Although the assumption that $\varphi$ is real-valued seems quite restrictive, the above formulation still comprises a considerably high number of applications in the above fields.

Probably the most prominent example in composite optimization, especially in the context of signal 
processing and compressed sensing, is the $\ell_1$-regularized least squares problem \cite{beck2009fast,figueiredo2007gradient,kim2007interior,wen2010fast}, also called basic pursuit denoising,
which tries to solve the problem
$$
 \min_x \frac{1}{2} \|Ax-b\|_2^2+\lambda\|x\|_1,
$$
where the quadratic term with $A\in\R^{n\times m}$, $b\in\R^m$ has the purpose to find an approximate solution of $Ax\approx b$, whereas the $\ell_1$-term with a regularization parameter $\lambda>0$ controls the sparsity of the solution. More details on this problem can be found in \cite{figueiredo2007gradient}. A wide class of more general applications combines this regularization $\varphi(x)=\lambda \|x\|_1$ with arbitrary convex \cite{beck2009fast,combettes2005signal,hale2008fixed,byrd2016inexact} or nonconvex \cite{milzarek2014} functions $f$
which are also covered by our setting. In particular, this includes problems with different loss functions like the logistic loss
$$
   f(x):=\tfrac 1m \sum_{i=1}^m \log\big(1+\exp(a_i^Tx)\big),
$$ 
see \cite{koh2007interior, lee2014proximal,byrd2016family}, or the nonconvex Student's $t$-loss 
$$
   f(x):=\tfrac 1m \sum_{i=1}^m \log\big( 1+(a_i^Tx-b_i)^2\big),
$$ 
for some data $a_i\in\R^n,b_i\in\R$, cf.\ \cite{aravkin2012robust2,milzarek2014}. These loss problems are typically used to classify data or reconstruct incomplete or blurred data under sparsity constraints.
For more applications of problem \eqref{eq:problem}, we refer to \cite{kanzow2021globalized,combettes2005signal,bonettini2017convergence} and references therein.
\bigskip

\noindent
There are countless algorithms for determining solutions of composite optimization problems. These include semismooth Newton methods \cite{milzarek2014,muoi2013semismooth,li2018highly}, interior point methods \cite{kim2007interior,koh2007interior}, trust-region methods \cite{aravkin2021proximal,chen2020trust}, fixed point methods \cite{chen2013primal,chen2016fixed}, or reformulations into a smooth problem with a forward backward envelope \cite{stella2017forward,themelis2018forward}, to name just a few. The focus in this paper, however, is on proximal-type methods, as these offer a very efficient way for solving many composite optimization problems.

Proximal-type methods for the solution of composite optimization problems
trace back to the generalized proximal-point method by Fukushima and Mine \cite{fukushima1981generalized}. The general purpose algorithm for solving \eqref{eq:problem} is to use a quadratic approximation of the smooth part $f$ and to solve, in each step, a problem of the form
\begin{equation}\label{eq:general_subproblem}
	\min_x f(x^k)+\nabla f(x^k)^T(x-x^k)+\frac 12(x-x^k)^T H_k(x-x^k)+\varphi(x),
\end{equation}
where $x^k$ denotes the current iterate. A crucial point for developing such algorithms is the choice of the matrix $H_k\in\R^{n\times n}$.

First-order methods use $H_k$ as a positive multiple of the identity matrix and are often referred to as proximal gradient methods. In many cases, $H_k$ is constant over the complete algorithm and does not depend on the iteration. The main advantage of these algorithms is that the solution of the subproblems \eqref{eq:general_subproblem} can be done very efficiently or sometimes even analytically
(depending on the function $ \varphi $). A prominent method of this kind is the Iterative Shrinkage Threshold Algorithm \cite{beck2009fast} and its separable extension \cite{tseng2009coordinate}. Many improvements are possible to accelerate this approach \cite{beck2009fast,gu2018inexact,nesterov2013gradient,wright2009sparse}.

Proximal quasi-Newton and variable metric proximal methods choose $ H_k $ by using a suitable updating technique, hence
$ H_k $ changes from iteration to iteration, and the quadratic part in the subproblem
\eqref{eq:general_subproblem} typically yields a much better approximation of the nonlinear function $ f $
than for the simple choice in proximal gradient methods.
On the other hand, this more advanced choice of $ H_k $ makes the subproblem \eqref{eq:general_subproblem}
more difficult to solve, in particular, analytic solutions are usually no longer available. In order to 
deal with this disadvantage, suitable methods therefore allow to solve these subproblems only inexactly.
Global convergence results for these proximal quasi-Newton methods are available in  \cite{bonettini2016variable,bonettini2017convergence,fountoulakis2018flexible,stella2017forward,jiang2012inexact},
which are based on different inexactness criteria, line search techniques, and appropriate assumptions regarding the choice of the sequence $ \{ H_k \} $ (usually uniform boundedeness and positive definiteness).

Using (at least approximate) second-order information in $H_k$ yields the class of proximal Newton methods \cite{becker2012quasi,becker2019quasi,lee2018distributed,ochs2019adaptive,lee2019inexact}. The standard technique to ensure global convergence is to combine the solution of the subproblems with some backtracking strategy.
Similar to proximal quasi-Newton methods, these proximal Newton approaches often use different criteria to solve \eqref{eq:general_subproblem} only inexactly. Despite having suitable global convergence properties, they
also inherit the local fast convergence known from Newton-type methods under certain assumptions, see \cite{mordukhovich2020globally,byrd2016family,ghanbari2018proximal,scheinberg2016practical,lee2014proximal,yue2019family} for several realizations.

In this article, we present a different approach, in which $H_k$ is the sum of a matrix $B_k$ and a multiple $\mu_kI$ of the identity matrix for some regularization parameter $\mu_k>0$. The purpose is to chose $B_k$ as a
(limited memory) quasi-Newton approximation to the Hessian $\nabla^2 f(x^k)$ in the current iterate and to increase or decrease $\mu_k$ according to a trust-region-type framework, depending on the merit of the iteration. As a consequence, the method gets along without using a classical line search approach, which turns out to be more efficient in numerical examples. Moreover, and this is a central point of our contribution, if
$B_k$ is chosen as a limited memory quasi-Newton approximation of $\nabla^2 f(x^k)$, we combine the theory of Becker et al. \cite{becker2019quasi} with the compact representation of these limited memory quasi-Newton methods
in order to get a very efficient solution technique for the resulting subproblems \eqref{eq:general_subproblem}. To the authors' knowledge, there exist only few publications dealing with limited memory matrices and the advantages of their compact representation for proximal-type methods, e.g.\ \cite{lee2018distributed,karimi2017imro}. The combination with the results in  \cite{becker2019quasi} outline the benefits and makes this technique applicable to a wider class of applications, especially for large scale problems. 

The idea of combining the regularization and (proximal) quasi-Newton techniques goes back to the corresponding methods for smooth problems ($\varphi=0$), where the subproblem \eqref{eq:general_subproblem} reduces to $H_k(x-x^k)=-\nabla f(x^k)$, at least if $H_k$ is positive semidefinite. Some improvements \cite{steck2019regularization,ueda2010convergence,ueda2014regularized,li2004regularized} have been made similar to our approach. Trust-region methods for nonsmooth problems in the form of \eqref{eq:problem} are also considered in different papers \cite{chen2020trust,fletcher1982model,kim2010scalable,qi1994trust}.
Techniques for the regularization of proximal quasi-Newton methods are investigated in several variations in literature. The proximal Newton method by Lee, Sun, Saunders \cite{lee2014proximal} does not explicitly use a regularization parameter, but the application to proximal quasi-Newton methods covers this idea if the regularization parameter tends to zero. A similar approach is used in the authors' work in \cite{kanzow2021globalized}.
Regularization of $B_k$ by adding a positive multiple of the identity matrix is also used in \cite{ghanbari2018proximal,scheinberg2016practical}, but convergence is only shown for convex functions $f$. Approaches for solving the subproblems inexactly are investigated in \cite{lee2019inexact,yue2019family}. 
Finally, we mention that the essence of the proximal quasi-Newton method from Karimi and Vavasis \cite{karimi2017imro} is similar to our approach. However, they only consider $\ell_1$-regularized least squares problems and allow $H_k$ to be a 'diagonal minus rank-1'-matrix. Furthermore, they do not use a regularization of $H_k$. Their theoretical approach is generalized by the work of Becker et al. \cite{becker2019quasi}.
We outline the main differences of these methods to the current one after stating our algorithm in Section \ref{sec:method}.

The paper is organized as follows. We first give an overview of some background material in Section~\ref{sec:prelims}. The regularized proximal quasi-Newton method itself is presented in 
Section~\ref{sec:method}. Global convergence of this method is shown in Section~\ref{sec:globalconvergence}
under fairly mild assumptions in the trust-region framework. In addition, under an error bound assumption we prove that a sequence generated by our method is convergent and summable.
Section~\ref{sec:limited} describes the new trick for an efficient
solution of the resulting subproblems \eqref{eq:general_subproblem} if $ B_k $ is computed by a limited memory quasi-Newton technique.
Numerical results and comparisons with some standard solvers are provided in Section~\ref{sec:numeric} with a focus on proximal-type methods.
We conclude with some final remarks in Section~\ref{sec:conclusion}.

Notation:
The set of all symmetric positive definite matrices in $\R^{n\times n}$ is denoted by $\mathbb{S}_{++}^n$. We write $A\succeq B$ or $A\succ B$, if the matrix $A-B$ is positive semidefinite or positive definite, resp.
For a symmetric matrix $H\in\R^{n\times n}$, $\lambda_{\min}(H)$ and $\lambda_{\max}(H)$ denote the smallest and largest eigenvalue of $H$, respectively. Furthermore, $\|\cdot\|$ and $\langle\cdot,\cdot\rangle$ are the Euclidean norm and scalar product, while $\|\cdot\|_H$ and $\langle\cdot,\cdot\rangle_H$ denote the norm and scalar product with respect to $H\in\mathbb{S}_{++}^n$, i.e.\ $\langle x,y\rangle_H=x^THy$ and $\|x\|_H=\sqrt{\langle x,x\rangle_H}$.
We write $x_{\mathcal{I}}$ to describe the subvector of $x\in\R^n$ consisting of all entries $x_i$ with $i\in \mathcal{I}$.

\section{Preliminaries}\label{sec:prelims}

This section summarizes some background material and states a preliminary result which will 
be used in order to derive and investigate our regularized proximal quasi-Newton method.

The \emph{subdifferential} $\partial \varphi(x)$ of a convex function $\varphi:\R^n\to\R$ in a point $x\in\R^n$ is defined as
$$
	\partial\varphi(x):=\big\{ s\in\R^n \mid \varphi(y)\geq \varphi(x)+s^T(y-x) \ 
	\forall y\in\R^n\big\}.
$$
Some properties of this subdifferential are summarized in the following proposition, cf.\ the 
classical monograph \cite{rockafellar2015convex} by Rockafellar for more details.

\begin{prop}\label{prop:subdifferential}
	Let $\varphi:\R^n\to\R$ be convex. Then the following statements hold:
	\begin{enumerate}[label=(\alph*)]
		\item $\partial \varphi(x)\neq \emptyset$ for every $x\in\R^n$ \cite[Theorem 23.4]{rockafellar2015convex}.
		\item $\partial\varphi$ maps bounded sets onto bounded sets \cite[Theorem 24.7]{rockafellar2015convex}.
		\item Let $\{x^k\},\{s^k\}\subset\R^n$ be sequences such that $x^k\to x^*$, $s^k\to s^*$ and $s^k\in\partial\varphi(x^k)$ for all $k\in\mathbb{N}$. Then $s^*\in\partial\varphi(x^*)$ (closedness of the subdifferential) \cite[Theorem 24.4]{rockafellar2015convex}.
		\item $x^*\in\arg\min \varphi$ if and only if $0\in\partial\varphi(x^*)$ (Fermat's rule) \cite[Theorem 16.3]{bauschke2017convex}.
	\end{enumerate}
\end{prop}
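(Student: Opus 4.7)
The plan is essentially to point to the classical references already named in the statement, since every item here is a textbook fact about the subdifferential of a real-valued convex function. If a self-contained derivation were required, the key structural feature to exploit repeatedly is that, because $\varphi$ is finite on all of $\R^n$, it is automatically continuous and indeed locally Lipschitz. This continuity drives the nontrivial parts (a)--(c); item (d) will come out as a direct reading of the definition.

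For (a), I would argue via the epigraph $\mathrm{epi}\,\varphi \subset \R^{n+1}$, which is a closed convex set containing the point $(x,\varphi(x))$ on its boundary. A supporting hyperplane at this point cannot be vertical—otherwise $\varphi$ would fail to be real-valued near $x$—so it takes the form $\{(y,t): t = \varphi(x) + s^T(y-x)\}$ for some $s \in \R^n$, which is exactly a subgradient. For (b), I would invoke the fact that a convex function finite on an open set is locally Lipschitz, so given a bounded set $B$ and a slightly enlarged ball there exists a common Lipschitz constant $L$. Any $s \in \partial\varphi(x)$ with $x \in B$ then satisfies $s^T v \le \varphi(x+v)-\varphi(x) \le L\|v\|$ for all sufficiently small $v$, which yields $\|s\| \le L$.

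Item (c) is essentially automatic from continuity: fix an arbitrary $y \in \R^n$, apply the subgradient inequality $\varphi(y) \ge \varphi(x^k) + (s^k)^T(y - x^k)$ for each $k$, and pass to the limit using $\varphi(x^k)\to \varphi(x^*)$ together with the convergences $x^k \to x^*$ and $s^k \to s^*$. The resulting inequality $\varphi(y) \ge \varphi(x^*) + (s^*)^T(y - x^*)$ for all $y$ is exactly the condition $s^* \in \partial\varphi(x^*)$. Item (d) is immediate: $0 \in \partial\varphi(x^*)$ reads $\varphi(y) \ge \varphi(x^*)$ for every $y$, which is equivalent to $x^* \in \arg\min \varphi$.

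The main obstacle, modest as it is, sits in (b), which piggybacks on the nontrivial fact that finite convex functions on open convex sets are locally Lipschitz. Granted that one ingredient, all four items follow from straightforward rearrangements of the subgradient inequality together with continuity, and so a referee-style \emph{cf.\ the cited references} would be entirely acceptable in place of a detailed argument.
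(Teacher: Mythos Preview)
Your proposal is correct and in fact goes beyond what the paper does: the paper provides no proof at all for this proposition, treating it purely as a collection of standard facts and simply attaching the references \cite{rockafellar2015convex,bauschke2017convex} to each item. Your sketch arguments for (a)--(d) are all sound, and your closing remark that a bare citation would be acceptable is exactly the route the paper takes.
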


\noindent
Note that, in general, parts (a) and (b) do not hold if $\varphi$ is extended-valued.

The basis of proximal-type methods is the proximity operator, introduced by Moreau \cite{moreau1965proximite}. For a convex function $\varphi:\R^n\to \R$ and a positive definite matrix $H\in\mathbb{S}_{++}^n$, the \emph{proximity operator} with respect to $H$ is the mapping
$$
	x\mapsto \prox_{\varphi}^H(x):=\underset{y}{\arg\min} \Big\{ \varphi(y) + \frac 12 (y-x)^TH(y-x)\Big\}.
$$
Since $H$ is positive definite, the regularization $\varphi(y)+\tfrac 12 (y-x)^TH(y-x)$ is strongly convex. Hence, it has a unique minimizer for every $x\in\R^n$, thus the proximity operator is 
well-defined. If $H$ is the identity matrix, we simply write 
\begin{align*}
   \prox_{\varphi} (x) := \prox_{\varphi}^I (x). 
\end{align*}
Some basic properties of the proximity operator are summarized in the following result.

\begin{prop}\label{prop:proximity}
	Let $\varphi:\R^n\to\R$ be convex and $H\in\mathbb{S}_{++}^n$. Then 
	the following statements hold:
	\begin{enumerate}[label=(\alph*)]
		\item The proximity operator is firmly nonexpansive with respect to the norm induced by $H$ \cite[Lemma 3.1.1]{milzarek2016numerical}, i.e. for any $x,y\in\R^n$ there holds
		$$
			\big\| \prox_{\varphi}^H(x)-\prox_{\varphi}^H(y)\big\|_H^2\leq \big\langle \prox_{\varphi}^H(x)-\prox_{\varphi}^H(y), x-y\big\rangle_H.
		$$
		\item $p=\prox_\varphi^H(x)$ if and only if $p\in x-H^{-1}\partial \varphi(p)$ \cite[Proposition 16.44]{bauschke2017convex}.
	\end{enumerate}
\end{prop}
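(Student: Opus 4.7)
The plan is to prove part (b) first and then use it to deduce part (a), since the subdifferential characterization is the workhorse for both.

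For (b), I would apply Fermat's rule (Proposition~\ref{prop:subdifferential}(d)) to the strongly convex objective $\Phi_x(y) := \varphi(y) + \tfrac12(y-x)^T H(y-x)$, whose unique minimizer is by definition $p = \prox_\varphi^H(x)$. The quadratic part $q(y) := \tfrac12(y-x)^T H(y-x)$ is differentiable with $\nabla q(y) = H(y-x)$, and since $\varphi$ is real-valued (hence has full domain), the sum rule gives $\partial \Phi_x(y) = \partial\varphi(y) + H(y-x)$. Therefore $0 \in \partial \Phi_x(p)$ is equivalent to $H(x-p) \in \partial\varphi(p)$, which in turn (since $H \in \mathbb{S}^n_{++}$ is invertible) can be rewritten as $p \in x - H^{-1}\partial\varphi(p)$.

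For (a), I set $p := \prox_\varphi^H(x)$ and $q := \prox_\varphi^H(y)$. By part (b), $H(x-p) \in \partial\varphi(p)$ and $H(y-q) \in \partial\varphi(q)$. Monotonicity of the convex subdifferential (obtained immediately by adding the two defining subgradient inequalities
$\varphi(q) \geq \varphi(p) + \langle H(x-p), q-p\rangle$ and
$\varphi(p) \geq \varphi(q) + \langle H(y-q), p-q\rangle$) then gives
\[
\big\langle H(x-p) - H(y-q),\, p-q \big\rangle \;\geq\; 0.
\]
Rearranging this inequality yields $\langle p-q,\, H(p-q)\rangle \leq \langle p-q,\, H(x-y)\rangle$, which in the $H$\=/inner product notation is exactly $\|p-q\|_H^2 \leq \langle p-q,\, x-y\rangle_H$, as desired.

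There is no real obstacle: both statements are standard and the only thing to be a little careful about is the sum rule used in (b), which here is unproblematic because $\varphi$ is finite-valued on all of $\R^n$ (so the relative interior conditions of the Moreau--Rockafellar calculus are trivially met) and the quadratic term is smooth. Everything else is a direct manipulation of the subgradient inequality.
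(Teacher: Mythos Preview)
Your argument is correct. Part (b) is exactly the Fermat\=/rule computation one would expect, and your derivation of (a) from (b) via monotonicity of the convex subdifferential is clean and complete; the care you take with the sum rule (using that $\varphi$ is real\=/valued and the quadratic term is smooth) is appropriate.

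Note, however, that the paper does not actually supply its own proof of this proposition: both parts are stated with direct citations to the literature (Milzarek's thesis for (a) and Bauschke--Combettes for (b)) and no argument is given. So there is nothing to compare against in terms of approach; your write\=/up simply provides a short, self\=/contained proof where the paper chose to quote standard references.
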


\noindent
Let $x,d\in\R^n$. Then, the directional derivative of $\psi$ in $x$ and direction $d$ is the one-sided limit
$$
	\psi'(x;d):=\lim_{t\downarrow0}\frac{\psi(x+td)-\psi(x)}{t}.
$$
We call $x^*\in\R^n$ a \emph{stationary point} of $\psi$ or a \emph{stationary point} of problem \eqref{eq:problem} if $0\in\nabla f(x^*)+\partial\varphi(x^*)$. Thus, we obtain the following characterizations:
\begin{align}
	x^*\text{ stationary point of }\psi \quad&\quad\Longleftrightarrow -\nabla f(x^*)\in\partial\varphi(x^*)\notag\\
	&\quad\Longleftrightarrow \psi'(x^*;d)\geq 0\text{ for all }d\in\R^n\label{eq:stationarypoint}\\
	&\quad\Longleftrightarrow x^*=\prox_\varphi^H(x^*-H^{-1}\nabla f(x^*)),\notag
\end{align}
where the second line follows from \cite[Proposition 17.14]{bauschke2017convex} and the final one is a consequence of Proposition \ref{prop:proximity}(b), which is independent of the particular matrix $H\in\mathbb{S}_{++}^n$. Given $x\in\R^n$ and $H\in\mathbb{S}_{++}^n$, it follows that the norm
of the corresponding residual
\begin{align*}
	r_H(x) :=&\underset{d}{\arg\min}\Big\{\nabla f(x)^Td+\frac 12 d^THd+\varphi(x+d)\Big\}=\prox_\varphi^H\big(x-H^{-1}\nabla f(x)\big)-x
\end{align*}
can be used to measure the stationarity of $ x $. For the special case $ H = I $, we again
simplify the notation and write $ r(x):=r_I(x)$. 
The relation between $ \| r_H(x) \| $ and
$ \| r_{\tilde{H}} (x) \| $ for two different matrices $ H, \tilde{H} $ is stated in the next result.

\begin{lem}\label{lem:tseng}
	Let $x\in\R^n$ and $H,\tilde{H}\in\mathbb{S}_{++}^n$. Then
	$$
	\|r_{\tilde{H}}(x)\|\leq \bigg( 1+\frac{\lambda_{\max}(\tilde{H})}{\lambda_{\min}(H)}\bigg)\cdot\frac{\lambda_{\max}(H)}{\lambda_{\min}(\tilde{H})}\cdot \|r_H(x)\|.
	$$
\end{lem}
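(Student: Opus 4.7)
The plan is to combine the prox characterization from Proposition~\ref{prop:proximity}(b) with monotonicity of the subdifferential of the convex function $\varphi$. Writing $u := r_H(x)$ and $v := r_{\tilde H}(x)$ and recalling the fixed-point formulas for the residuals, Proposition~\ref{prop:proximity}(b) gives
\[
  -\nabla f(x) - Hu \in \partial\varphi(x+u), \qquad -\nabla f(x) - \tilde H v \in \partial\varphi(x+v).
\]

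From here I would invoke monotonicity of $\partial\varphi$ (a direct consequence of its defining inequality) with these two pairs of subgradients and base points. The $\nabla f(x)$-terms cancel, and using the symmetry of $H$ and $\tilde H$ the inequality rearranges to
\[
  \|v\|_{\tilde H}^2 + \|u\|_H^2 \;\leq\; u^T(\tilde H + H)\,v.
\]

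Next I would lower-bound the left-hand side by $\lambda_{\min}(\tilde H)\|v\|^2$ (dropping the nonnegative $\|u\|_H^2$), and upper-bound the right-hand side by Cauchy--Schwarz and the triangle inequality,
\[
  u^T(\tilde H + H)v \;\leq\; \|u\|\bigl(\|\tilde H v\| + \|Hv\|\bigr) \;\leq\; \bigl(\lambda_{\max}(\tilde H)+\lambda_{\max}(H)\bigr)\|u\|\|v\|.
\]
Dividing by $\|v\|$ (the case $v=0$ being trivial) yields the intermediate bound $\|v\|\leq (\lambda_{\max}(\tilde H)+\lambda_{\max}(H))/\lambda_{\min}(\tilde H)\cdot\|u\|$. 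A one-line arithmetic comparison, using $\lambda_{\min}(H)\leq\lambda_{\max}(H)$ to conclude that $\lambda_{\max}(\tilde H)+\lambda_{\max}(H) \leq \lambda_{\max}(H)(1+\lambda_{\max}(\tilde H)/\lambda_{\min}(H))$, then gives the asymmetric product form asserted in the lemma.

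I do not expect any conceptual obstacle---the ingredients are only monotonicity of $\partial\varphi$, Cauchy--Schwarz, and standard eigenvalue estimates. The only mildly annoying point is matching the precise constant $(1+\lambda_{\max}(\tilde H)/\lambda_{\min}(H))\cdot\lambda_{\max}(H)/\lambda_{\min}(\tilde H)$ stated in the lemma: the monotonicity argument naturally produces the sharper quantity $(\lambda_{\max}(\tilde H)+\lambda_{\max}(H))/\lambda_{\min}(\tilde H)$, and one has to carry out the short comparison above to bring it into the form written. If instead one tries to obtain the product form directly---say, by writing $T_H(x)$ as a $\tilde H$-proximity step and invoking nonexpansiveness of $\prox_\varphi^{\tilde H}$---one again has to estimate the operator norm of a matrix of the form $I - \tilde H^{-1/2}H\tilde H^{-1/2}$ by $1 + \lambda_{\max}(H)/\lambda_{\min}(\tilde H)$, which requires similar case analysis.
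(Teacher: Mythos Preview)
Your argument is correct and, in fact, more elementary than the paper's. The paper does not prove the inequality from scratch: it quotes \cite[Lemma~3]{tseng2009coordinate}, which already delivers
\[
  \|r_{\tilde H}(x)\|\leq\frac{1+\lambda_{\max}(Q)+\sqrt{1-2\lambda_{\min}(Q)+\lambda_{\max}(Q)^2}}{2}\,\frac{\lambda_{\max}(H)}{\lambda_{\min}(\tilde H)}\,\|r_H(x)\|,
  \qquad Q:=H^{-1/2}\tilde H H^{-1/2},
\]
and then simplifies the square-root term and bounds $\lambda_{\max}(Q)\leq\lambda_{\max}(\tilde H)/\lambda_{\min}(H)$ to reach the stated form. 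Your route---writing down the two optimality inclusions, using monotonicity of $\partial\varphi$ to obtain $\|v\|_{\tilde H}^2+\|u\|_H^2\leq u^T(H+\tilde H)v$, and then applying Cauchy--Schwarz together with obvious eigenvalue bounds---avoids the external lemma entirely and even produces the slightly sharper constant $(\lambda_{\max}(H)+\lambda_{\max}(\tilde H))/\lambda_{\min}(\tilde H)$ before you weaken it (via $\lambda_{\min}(H)\leq\lambda_{\max}(H)$) to match the asymmetric product in the statement. Both approaches are short; yours is self-contained, while the paper's has the advantage of tying the result to an existing, slightly more refined estimate in the literature.
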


\begin{proof}
	By \cite[Lemma 3]{tseng2009coordinate}, we get
	$$
	\|r_{\tilde{H}}(x)\|\leq\frac{1+\lambda_{\max}(Q)+\sqrt{1-2\lambda_{\min}(Q)+
	\lambda_{\max}(Q)^2}}{2}\ \frac{\lambda_{\max}(H)}{\lambda_{\min}(\tilde{H})}\cdot \|r_H(x)\|,
	$$
	where $Q:=H^{-1/2}\tilde{H}H^{-1/2}$ is also positive definite. The claim follows from the inequalities
	$$
		1-2\lambda_{\min}(Q)+\lambda_{\max}(Q)^2\leq 1+\lambda_{\max}(Q)^2\leq (1+\lambda_{\max}(Q))^2
	$$
	and $\lambda_{\max}(Q)\leq \lambda_{\max}(\tilde{H})/\lambda_{\min}(H)$. The latter estimate
	follows from 
	\begin{align*}
	   \lambda_{\max}& (Q)  = \max_{x \neq 0} \frac{x^T H^{-1/2} \tilde{H} H^{-1/2} x}{x^T x} 
	   = \max_{z \neq 0} \frac{z^T \tilde{H} z}{z^T H z}
	   = \max_{z \neq 0} \bigg( \frac{z^T \tilde{H} z}{z^T z} \frac{z^T z}{z^T H z} \bigg) \\
	   & \leq \bigg( \max_{z \neq 0} \frac{z^T \tilde{H} z}{z^T z} \bigg)
	   \bigg( \max_{z \neq 0} \frac{1}{\frac{z^T H z}{z^T z}} \bigg) 
	   = \lambda_{\max} (\tilde{H}) \frac{1}{\min_{z \neq 0} \frac{z^T H z}{z^T z}} 
	   = \lambda_{\max} (\tilde{H}) \cdot \frac{1}{\lambda_{\min} (H)},
	\end{align*}
	and this completes the proof.
\end{proof}

\section{The Regularized Proximal Quasi-Newton Method}\label{sec:method}

This section contains a detailed derivation and discussion of our regularized proximal quasi-Newton method.
Given an iterate $x^k\in\R^n$, consider the subproblem
\begin{equation}\label{eq:ClassicalSubproblem}
   \min_d q_k(d) \quad \text{with} \quad q_k(d):=f(x^k)+\nabla f(x^k)^T d+\tfrac 12 d^TB_kd+\varphi(x^k+d),
\end{equation}
where the first part is a quadratic approximation to the smooth function $f$, with $B_k$ being
a (possibly bad) approximation of the (possibly not existing) Hessian $\nabla^2 f(x^k)$. The main idea of proximal quasi-Newton methods is then to compute $ d^k $ as a solution of the subproblem 
\eqref{eq:ClassicalSubproblem}, and to 
set $ x^{k+1} := x^k + d^k $ provided that $ d^k $ is accepted by a suitable line search
or trust-region strategy in order to obtain global convergence results. Here, the 
globalization is done by a regularization parameter, no line search is required
(which might result in many function evaluations), and no trust-region radius is
needed (in particular, no trust-region-type subproblem has to be solved).
Instead, however, additional evaluations of the proximity operator may be required, which can be quite expensive. Nevertheless, numerical tests show that this additional effort leads to significantly fewer iterations and thus lower overall costs, and, furthermore, trust-region methods are more appropriate, especially for non-convex global optimization problems.

The regularized proximal quasi-Newton method therefore considers the regularized approximation
\begin{equation}\label{eq:hatq_definition}
	\hat{q}_k(d):=q_k(d)+\tfrac 12\mu_k \|d\|^2 = f(x^k)+\nabla f(x^k)^T d+\tfrac 12 d^T(B_k+\mu_k I)d+\varphi(x^k+d)
\end{equation}
with some parameter $\mu_k>0$. To control the success of a candidate $d^k$, which is a solution of 
the regularized subproblem $\min_d \hat{q}_k(d)$, we define the \emph{predicted reduction} of $\psi$ as
$$
	\pred_k:=\psi(x^k)-q_k(d^k)=-\big(\nabla f(x^k)^Td^k+\varphi(x^k+d^k)-\varphi(x^k)\big)-\tfrac 12 (d^k)^TB_kd^k
$$
and the \emph{actual reduction} of $\psi$ as $\ared_k:=\psi(x^k)-\psi(x^k+d^k)$.
The ratio $\rho_k:=\ared_k/\pred_k$ between these quantities is, similar to trust-region methods \cite{conn2000trust}, used to control the update of the regularization parameter and the iterate. Since $B_k$ does not need to be positive definite, we have to take into account that a minimizer of $\hat{q}_k$ may not exist 
or the corresponding value $\pred_k$ is not (sufficiently) positive. These situations are handled as unsuccessful steps. Altogether, this motivates the following algorithm.

\begin{alg} [Regularized Proximal Quasi-Newton Method] \label{alg:rpqnm} 
\leavevmode \vspace{-1.3\baselineskip}
\begin{itemize}
   \item[(S.0)] Choose $x^0\in\mathbb{R}^n$, parameters $\mu_0>0$, $p_{\min}\in(0,\tfrac 12)$, $c_1\in (0,\tfrac 12)$, $c_2\in(c_1,1)$, $\sigma_1\in(0,1), \sigma_2>1$, and set $ k := 0 $. 
		\item[(S.1)] If $ x^k $ satisfies a suitable termination criterion: STOP.
		\item[(S.2)] Choose $B_k\in\mathbb{R}^{n\times n}$, and find a solution $d^k$ of the problem
  		\begin{equation}\label{eq:algorithm-subproblem}
			\min_d \hat{q}_k(d) = f(x^k)+\nabla f(x^k)^T d+\tfrac 12 d^T(B_k+\mu_k I)d+\varphi(x^k+d).
		\end{equation}
		If this problem has no solution, or if 
		\begin{equation}\label{eq:pred_condition}
		   \pred_k\leq p_{\min}\|d^k\|\cdot \|r(x^k)\|,
		\end{equation}
		set $x^{k+1}:=x^k$, $\mu_{k+1}:=\sigma_2 \mu_k$, and go to (S.4). Otherwise go to (S.3).
		\item[(S.3)] Set $\rho_k:=\ared_k/\pred_k$ and perform the following updates:
		$$
			x^{k+1}:=\begin{cases} x^k&\text{if }\rho_k\leq c_1 , \\ 
			x^k+d^k&\text{otherwise,} \end{cases}\qquad 
			\mu_{k+1}:=\begin{cases} \sigma_2\mu_k&\text{if }\rho_k\leq c_1, \\\mu_k&\text{if }
		        c_1<\rho_k\leq c_2, \\ \sigma_1\mu_k&\text{otherwise}.\end{cases}
		$$
		\item[(S.4)] Update $k\leftarrow k+1$, and go to (S.1).
	\end{itemize}
\end{alg}

\noindent
In the following, we call an iteration $k$
\begin{itemize}
	\itemsep0pt
	\item \emph{unsuccessful}, if (S.3) is skipped or $\rho_k\leq c_1$,
	\item \emph{successful,} if $c_1<\rho_k\leq c_2$,
	\item \emph{highly successful,} if $\rho_k>c_2$.
\end{itemize}
Note that, in an unsuccessful iteration, both (S.2) and (S.3) keep the current iterate $ x^k $ and choose a 
larger regularization parameter. In all other iterations, we update $ x^{k+1} $ and either keep
the regularization parameter $ \mu_k $ (in all successful iterations) or reduce this
parameter (in all highly successful iterations). We also stress that a test like 
\eqref{eq:pred_condition} is not required by trust-region methods since, there, the 
corresponding predicted reduction is automatically positive, whereas this cannnot be
guaranteed in our setting. Whenever we reach (S.3), however, the value of $\pred_k$ is
(sufficiently) positive, which, in turn, implies that the overall method is well-defined.
\bigskip

\noindent
We briefly discuss the differences between Algorithm \ref{alg:rpqnm} and some affiliated methods.
The methods in \cite{ghanbari2018proximal,scheinberg2016practical} are based on a similar regularization than ours, where the regularization parameter is only increased if a suitable criterion is not satisfied for the solution of the subproblems. In contrast to our method, they do not consider the possibility to reduce the regularization parameter if an iterate is highly successful. Convergence is shown under the assumption of strong convexity of $f$. Furthermore, they combine the method with an inexactness criterion on the subproblem and  use a FISTA-type acceleration. In this case, a main assumption on $f$ is convexity.

The method by Karimi and Vavasis \cite{karimi2017imro} is a basic proximal Newton method for solving $\ell_1$-regularized least squares problems. No regularization is included and their analysis focusses on $H_k$ being a rank-1 modification of a multiple of the identity.

The inexact algorithms by Lee and Wright \cite{lee2019inexact} use two different types of regularization: $H_k=B_k+\mu_kI$ or $H_k=\mu_kB_k$ with a positive regularization parameter $\mu_k$, which is initially set to 1 in each step and increased until a sufficient decrease condition is satisfied. In contrast to our method, it is not possible to choose $\mu_k$ small when the iterate is close to a solution. Convergence is shown for $ \nabla f $
being Lipschitz continuous (but $f$ is not necessarily convex). Moreover, some improved convergence results are provided for strongly convex functions.

Yue et al.\ \cite{yue2019family} develop another inexact regularized proximal Newton method. A main difference to our approach is that, instead of an approximation $B_k$, the exact Hessian of $f$ is used and the regularization parameter $\mu_k$ is chosen due to the optimality of the current iterate, and not based on the quality of the current update. Furthermore, the subproblems are solved inexactly, and an Armijo-type line search is performed. The convergence proof needs convexity of $f$ and uses an error bound.

In contrast to these methods, we do not provide a theory for inexact solutions of the subproblems in (S.2). It turns out that this is not necessary since these problems can be solved very efficiently and with high accuracy in our numerical examples.
\bigskip

\noindent
In view of \eqref{eq:stationarypoint}, we know that $x^k$ is a stationary point of $\psi$ if and only if $r(x^k)=0$. Combining this property with the (uniform) continuity of $r(\cdot)$ yields an appropriate termination criterion for Algorithm \ref{alg:rpqnm}. For the method to be well-defined, we need a similar property for the value $d^k$ (note that, by definition, we have $ d^k = r_{B_k + \mu_k I} (x^k) $, if the matrix $B_k+\mu_k I$ is positive definite).

\begin{lem}\label{lem:dequals0}
	If $d^k=0$ in Algorithm \ref{alg:rpqnm}, then $x^k$ is a stationary point of $\psi$.
	The converse is true if $B_k+\mu_kI$ is positive definite.
\end{lem}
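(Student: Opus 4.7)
My plan is to handle the two directions separately, using the stationarity characterization in \eqref{eq:stationarypoint} as the central tool in both cases.

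For the first direction, I assume $d^k=0$ is a minimizer of $\hat{q}_k$. The main care needed here is that $\hat{q}_k$ is not necessarily convex (since $B_k$ may be indefinite), so I cannot directly invoke a proximal-operator identity or Fermat's rule in the convex sense. Instead I exploit global minimality along rays: for every $d\in\R^n$ and every $t\in(0,1]$, the inequality $\hat{q}_k(td)\geq\hat{q}_k(0)$ reads
\begin{equation*}
  t\,\nabla f(x^k)^T d+\tfrac{t^2}{2}d^T(B_k+\mu_k I)d+\varphi(x^k+td)-\varphi(x^k)\geq 0.
\end{equation*}
Dividing by $t$ and letting $t\downarrow 0$, the quadratic term vanishes and the convexity (hence directional differentiability) of $\varphi$ yields $\nabla f(x^k)^T d+\varphi'(x^k;d)=\psi'(x^k;d)\geq 0$. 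Since $d$ is arbitrary, the middle equivalence in \eqref{eq:stationarypoint} gives that $x^k$ is stationary.

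For the converse, assume $x^k$ is stationary and $B_k+\mu_k I\succ 0$. Then $\hat{q}_k$ is the sum of a strongly convex quadratic and the convex function $\varphi(x^k+\cdot)$, so it is strongly convex and admits a unique minimizer. Completing the square in the quadratic part allows us to rewrite this minimizer as
\begin{equation*}
  x^k+d^k=\prox_\varphi^{B_k+\mu_k I}\!\bigl(x^k-(B_k+\mu_k I)^{-1}\nabla f(x^k)\bigr).
\end{equation*}
By the third equivalence in \eqref{eq:stationarypoint}, stationarity of $x^k$ means precisely that the right-hand side equals $x^k$, so $d^k=0$ by uniqueness of the prox.

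The only subtle point is the first direction, where one cannot apply Proposition~\ref{prop:proximity}(b) or the standard convex Fermat rule, since $\hat{q}_k$ need not be convex; I work around this by comparing function values along rays and passing to the directional derivative, thereby bypassing any subdifferential calculus in the possibly nonconvex setting.
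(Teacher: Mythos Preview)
Your proof is correct, but both directions take a somewhat different route from the paper.

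For the forward direction, the paper simply applies Fermat's rule to the composite minimization problem $\min_d \hat q_k(d)$ to obtain $0\in\nabla f(x^k)+(B_k+\mu_kI)d^k+\partial\varphi(x^k+d^k)$, and then substitutes $d^k=0$. Your concern that Fermat's rule is unavailable because $\hat q_k$ may be nonconvex is a bit overstated: the first-order necessary condition at a (local) minimizer of a sum of a $C^1$ function and a convex function always reads ``gradient of smooth part plus convex subdifferential contains zero'', irrespective of overall convexity. That said, your ray argument is a perfectly valid and self-contained derivation of the same conclusion, and it has the merit of not relying on any subdifferential sum rule beyond what is stated in the paper's preliminaries.

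For the converse, the paper uses the subgradient inequality $-\nabla f(x^k)\in\partial\varphi(x^k)$ to show directly that $\hat q_k(0)\le\hat q_k(d)$ for all $d$, and then invokes uniqueness of the minimizer. You instead rewrite the minimizer via the prox and appeal to the third equivalence in~\eqref{eq:stationarypoint}. Both arguments are short and correct; yours exploits the prepared preliminaries more efficiently, while the paper's is more explicit.
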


\begin{proof}
	Assume that $d^k=0$. From the definition of $d^k$ and Fermat's rule, we get
	$$
		0\in \nabla f(x^k)+(B_k+\mu_kI)d^k+\partial\varphi(x^k+d^k).
	$$
	Plugging in $d^k=0$ yields $0\in\nabla f(x^k)+\partial\varphi(x^k)$, which is the desired result. Conversely, let $B_k+\mu_kI$ be positive definite and $x^k$ a stationary point of $\psi$. Then $-\nabla f(x^k)\in\partial \varphi(x^k)$, which yields 
	$\varphi(x^k+d)\geq\varphi(x^k)-\nabla f(x^k)^Td$ for every $d\in\R^n$. Thus,
	\begin{align*}
	\hat{q}_k(0)&=f(x^k)+\varphi(x^k)\leq f(x^k)+\nabla f(x^k)^Td+\varphi(x^k+d)\\
	&\leq f(x^k)+\nabla f(x^k)^Td+\frac 12 d^T(B_k+\mu_kI)d+\varphi(x^k+d)=\hat{q}_k(d)
	\end{align*}
	for all $ d \in \R^n $.
	Hence, $d^k=0$ due to the uniqueness of the global minimum for $ B_k + \mu_k I $ being
	positive definite.
\end{proof}
\bigskip

\noindent
It is not difficult to see that the converse statement in Lemma \ref{lem:dequals0} may not
hold if $ B_k + \mu_k I $ is only positive semidefinite or indefinite. Hence, the termination criterion in (S.1) of Algorithm \ref{alg:rpqnm} should rely on $r(x^k)$ instead of $d^k$ as positive definiteness of $B_k+\mu_kI$ is not required.

\section{Global Convergence Theory}\label{sec:globalconvergence}

In this section, we investigate the global convergence properties of Algorithm \ref{alg:rpqnm}. Similar to convergence results for trust-region methods this means that $\liminf_{k\to\infty}\|r(x^k)\|=0$ or $\lim_{k\to \infty}\|r(x^k)\|=0$, depending on the assumptions. Using \eqref{eq:stationarypoint}, this implies that every accumulation point is a stationary point of $\psi$.
To prove this, we assume that Algorithm~\ref{alg:rpqnm} generates an infinite sequence $ \{ x^k \} $.
Though, formally, we did not specify the termination criterion in (S.1), any suitable
stopping criterion will include a test whether the current point $ x^k $ is already a stationary
point of the given optimization problem. Now, to simplify some of the subsequent phrases, we
therefore assume throughout this section that none of the iterations $ x^k $ is already a stationary point. Then, by Lemma \ref{lem:dequals0}, we have $ d^k \neq 0 $ for all $ k $.

The subsequent global convergence analysis of Algorithm \ref{alg:rpqnm} does not require the
matrices $ B_k $ to be good approximations of the corresponding (possibly not existing)
Hessians $ \nabla^2 f(x^k) $.
We only need that the sequence $ \{ B_k \} $ is bounded. Before presenting the two main
global convergence theorems, we establish some technical results.

\begin{lem}\label{lem:limit}
    Let $ \{ B_k \} $ be a bounded sequence of symmetric matrices. Assume that $ \mu_k \to \infty $
    and $\{x^k\}\subset\R^n$ converges to a nonstationary point $\overline{x}$ of $\psi$. Then
	$$
	\underset{k\to\infty}{\lim\sup} \frac{\|r(x^k)\|}{\|r_{B_k+\mu_kI}(x^k)\|\cdot \mu_k}\leq 1.
	$$
\end{lem}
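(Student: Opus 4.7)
The natural approach is to apply Lemma \ref{lem:tseng} with $ H := B_k + \mu_k I $ and $ \tilde{H} := I $. For large $ k $ this choice of $ H $ is positive definite (see below), so the lemma applies and directly produces an upper bound on $ \|r(x^k)\| = \|r_I(x^k)\| $ in terms of $ \|r_{B_k+\mu_k I}(x^k)\| $. The entire question then reduces to showing that the resulting prefactor, divided by $ \mu_k $, has limit $ 1 $.

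My first step would be to exploit the boundedness of $ \{B_k\} $ to fix a constant $ M > 0 $ with $ |\lambda_{\min}(B_k)|, |\lambda_{\max}(B_k)| \leq M $ for all $ k $. Combined with $ \mu_k \to \infty $, this yields, for all sufficiently large $ k $, the eigenvalue sandwich
$$
\mu_k - M \;\leq\; \lambda_{\min}(B_k+\mu_k I) \;\leq\; \lambda_{\max}(B_k+\mu_k I) \;\leq\; \mu_k + M.
$$
In particular $ B_k + \mu_k I \in \mathbb{S}_{++}^n $, and Lemma \ref{lem:tseng} then gives
$$
\|r(x^k)\| \;\leq\; \Bigl(1 + \tfrac{1}{\lambda_{\min}(B_k+\mu_k I)}\Bigr) \, \lambda_{\max}(B_k+\mu_k I) \, \|r_{B_k+\mu_k I}(x^k)\|.
$$
Dividing by $ \mu_k \cdot \|r_{B_k+\mu_k I}(x^k)\| $ leaves a product of two factors. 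The first tends to $ 1 $ since its denominator is at least $ \mu_k - M \to \infty $, and the second, $ \lambda_{\max}(B_k+\mu_k I)/\mu_k $, is squeezed between $ 1 - M/\mu_k $ and $ 1 + M/\mu_k $ and hence also tends to $ 1 $. Taking $ \limsup $ gives exactly the claimed bound.

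The hypothesis that $ \{x^k\} $ converges to a nonstationary point $ \overline{x} $ is used only to guarantee that the quotient in the statement is well-defined for all large $ k $. By continuity of the proximity operator (which is nonexpansive, Proposition \ref{prop:proximity}(a)) and of $ \nabla f $, we have $ r(x^k) \to r(\overline{x}) $, and by the characterization \eqref{eq:stationarypoint} the nonstationarity of $ \overline{x} $ yields $ r(\overline{x}) \neq 0 $; hence $ \|r(x^k)\| $ stays bounded away from $ 0 $, and the displayed inequality then forces $ \|r_{B_k+\mu_k I}(x^k)\| \neq 0 $ for large $ k $ as well.

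I do not anticipate a real obstacle. The only delicate point is the correct identification of which matrix plays the role of $ H $ and which plays $ \tilde{H} $ in Lemma \ref{lem:tseng} (swapping them would produce a bound in the wrong direction); beyond that, the argument is a direct eigenvalue estimate followed by a routine two-factor limit.
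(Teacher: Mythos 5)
Your proof is correct and follows essentially the same route as the paper: apply Lemma \ref{lem:tseng} with $H = B_k + \mu_k I$ and $\tilde{H} = I$, note that the prefactor $\bigl(1+\tfrac{1}{\lambda_{\min}(B_k)+\mu_k}\bigr)(\lambda_{\max}(B_k)+\mu_k)/\mu_k$ tends to $1$ by boundedness of $\{B_k\}$ and $\mu_k\to\infty$, and use nonstationarity of $\overline{x}$ only to keep the quotient well-defined. The paper's proof is identical in substance, so there is nothing to add.
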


\begin{proof}
	The assumptions imply that $B_k+\mu_kI$ is positive definite for all sufficiently large $k$. Furthermore, $\|r_{B_k+\mu_kI}(x^k)\|\neq 0$ for sufficiently large $k\geq 0$ since $\overline{x}$ is not a stationary point of $\psi$ and $r$ is continuous. Thus, we can apply Lemma \ref{lem:tseng} with $H=B_k+\mu_kI$ and $\tilde{H}=I$ to get
	$$
		\frac{\|r(x^k)\|}{\|r_{B_k+\mu_kI}(x^k)\|}\leq \Big(1+\frac{1}{\lambda_{\min}(B_k)+\mu_k}\Big)\cdot\big(\lambda_{\max}(B_k)+\mu_k\big).
	$$
	Dividing this estimate by $\mu_k$, using the boundedness of the sequence $\{B_k\}$, and taking $k\to\infty$, it follows that the expression on the right-hand side tends to 1, which yields the claim.
\end{proof}

\noindent 
Recall that if $B_k+\mu_k I$ is positive definite, step $d^k$ can be written as $ d^k = r_{B_k + \mu_k I} (x^k) $. In the next result, we show that this sequence is a vanishing sequence under the assumptions that the sequence $\{\mu_k\}$ tends to $+\infty$ and $\{x^k\}$ is bounded.

\begin{prop}\label{prop:dtozero}
	Let $\{B_k\}$ be a bounded sequence of symmetric matrices. Assume that $\mu_k\to \infty$ and the sequence $\{x^k\}\subset\R^n$ generated by Algorithm \ref{alg:rpqnm} is bounded. Let $d^k:=r_{B_k+\mu_kI}(x^k)$. Then $d^k\to 0$.
\end{prop}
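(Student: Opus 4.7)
The plan is to exploit the fact that $d^k$ minimizes $\hat q_k$, so in particular $\hat q_k(d^k) \leq \hat q_k(0)$, and to extract from this an estimate of the form $\|d^k\|^2 \lesssim \|d^k\|/\mu_k$ for all sufficiently large $k$.

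Unfolding the inequality $\hat q_k(d^k)\le\hat q_k(0)$ gives
$$
\nabla f(x^k)^T d^k+\tfrac12(d^k)^T(B_k+\mu_k I)d^k+\varphi(x^k+d^k)-\varphi(x^k)\le 0.
$$
To control the nonsmooth increment, I would pick any $s^k\in\partial\varphi(x^k)$ (nonempty by Proposition \ref{prop:subdifferential}(a)) and use convexity to bound $\varphi(x^k+d^k)-\varphi(x^k)\ge (s^k)^T d^k$. Substituting this into the display yields
$$
\tfrac12(d^k)^T(B_k+\mu_k I)d^k\le -\bigl(\nabla f(x^k)+s^k\bigr)^T d^k\le \bigl\|\nabla f(x^k)+s^k\bigr\|\cdot\|d^k\|.
$$

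Next I would argue that the right-hand coefficient and the left-hand quadratic form have the desired uniform control. Since $\{x^k\}$ is bounded, it lies in a compact set on which $\nabla f$ is bounded (by continuity); meanwhile, Proposition \ref{prop:subdifferential}(b) ensures that the selected subgradients $s^k$ can be taken from a bounded set. Hence $\{\nabla f(x^k)+s^k\}$ is bounded by some constant $C>0$. On the other hand, boundedness of $\{B_k\}$ gives a uniform lower bound $\lambda_{\min}(B_k)\ge -M$; combined with $\mu_k\to\infty$, we have $\lambda_{\min}(B_k+\mu_kI)\ge \mu_k-M\ge \tfrac12\mu_k$ for all sufficiently large $k$.

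Putting these two bounds together, for all sufficiently large $k$,
$$
\tfrac14\mu_k\|d^k\|^2\le C\|d^k\|,
$$
hence $\|d^k\|\le 4C/\mu_k$, which tends to $0$. The main thing to be careful about is handling the nonsmooth term, where a naive Lipschitz estimate is unavailable; the convexity trick of lower-bounding via a subgradient plus the uniform boundedness supplied by Proposition \ref{prop:subdifferential}(b) is the key step. The rest is just matching this to the coercivity of $B_k+\mu_k I$ supplied by $\mu_k\to\infty$.
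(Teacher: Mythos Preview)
Your proof is correct and follows essentially the same approach as the paper: both start from $\hat q_k(d^k)\le\hat q_k(0)$, replace the nonsmooth increment $\varphi(x^k+d^k)-\varphi(x^k)$ by the subgradient lower bound $(s^k)^Td^k$ with $s^k\in\partial\varphi(x^k)$, invoke boundedness of $\{x^k\}$ together with Proposition~\ref{prop:subdifferential}(b) to control the linear terms, and then let the quadratic term $\tfrac12(d^k)^T(B_k+\mu_kI)d^k$ with $\mu_k\to\infty$ force $d^k\to 0$. Your version is in fact slightly cleaner, since by comparing $\hat q_k(d^k)$ directly to $\hat q_k(0)=\psi(x^k)$ the constant terms cancel and you obtain the explicit rate $\|d^k\|\le 4C/\mu_k$, whereas the paper routes through $\psi(x^0)\ge\psi(x^k)$ and argues more qualitatively that the quadratic term dominates.
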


\begin{proof}
	Note that the boundedness of the sequence $\{B_k\}$ and $\mu_k\to\infty$ imply that $d^k$ is well defined for sufficiently large $k$. Moreover, the definition of successful steps implies that the sequence $\{\psi(x^k)\}$ is a monotonically decreasing. Hence, for all $k\in\mathbb{N}$ sufficiently large, we have
		\begin{align*}
		\psi(x^0) & \geq \psi(x^k) 
		= \hat{q}_k(0)
		\geq \hat{q}_k(d^k) \\
		&= f(x^k)+\nabla f(x^k)^Td^k+\frac 12 (d^k)^T(B_k+\mu_k I)d^k+\varphi(x^k+d^k)\\
		&\geq f(x^k)+\nabla f(x^k)^Td^k+\frac 12 (d^k)^T(B_k+\mu_k I)d^k+\varphi(x^k)+( u^k)^Td^k
		\end{align*}
	for some $u^k\in\partial \varphi(x^k)$. Since, by assumption, the sequences $\{x^k\}$ and $\{B_k\}$ are bounded and, therefore, the sequences $\{f(x^k)\}$, $\{\varphi(x^k)\}$, $\{\nabla f(x^k)\}$, and $\{u^k\}$ are bounded by the continuity of $f$, $\varphi$ and $\nabla f$ and Propositon \ref{prop:subdifferential}~(a), the limiting behaviour of the right-hand side is dominated by the quadratic term $\tfrac 12 (d^k)^T(B_k+\mu_k I)d^k$. Thus, this term is bounded from above, and the assumption $\mu_k\to \infty$ immediately implies $d^k\to 0$.
	
%
\end{proof}

\noindent
The following result will be applied to the 
situation where we have only finitely many successful iterations, i.e., where $ x^k $
stays constant eventually, say $ x^k = x^{k_0} $ for all $ k\geq k_0 $ and some
sufficiently large index $ k_0 \in \mathbb{N} $. We formulate this result in a slightly more general context and
assume that we have a nonstationary limit point $ \overline x $. To avoid any ambiguity in the 
notation, we write $ \bar d^k := r_{B_k + \mu_k I} (\overline x) $, although, in the subsequent application, we will eventually have $ \bar d^k = d^k $ since $ \overline x $ corresponds 
to $ x^{k_0} $ ($ = x^k $ for all $ k\geq k_0 $). 

\begin{lem} \label{lem:descentdirection}
    Let $ \{ B_k \} $ be a bounded sequence of symmetric matrices. Assume that $ \mu_k \to \infty $
    and $\overline{x}$ is a nonstationary point of $\psi$.
	Define $\bar d^k:=r_{B_k+\mu_kI}(\overline{x})$, and let $s$ be an accumulation point of the sequence 
	$\{\bar d^k/\|\bar d^k\|\}$. Then $\psi'(\overline{x};s)<0$.
\end{lem}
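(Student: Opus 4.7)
The plan is to exploit the optimality of $\bar d^k$ in the subproblem defining $r_{B_k+\mu_k I}(\overline{x})$. Comparing with the test direction $d=0$ yields the key inequality
\begin{equation*}
  \nabla f(\overline{x})^T \bar d^k + \tfrac12 (\bar d^k)^T (B_k + \mu_k I) \bar d^k + \varphi(\overline{x}+\bar d^k) - \varphi(\overline{x}) \le 0.
\end{equation*}
First I would show $\bar d^k \to 0$ by the argument of Proposition \ref{prop:dtozero}, applied to the constant ``iterate'' $\overline{x}$: combining the displayed inequality with $\varphi(\overline{x}+\bar d^k) \ge \varphi(\overline{x}) + v^T \bar d^k$ for a fixed $v \in \partial\varphi(\overline{x})$ yields $\tfrac12(\lambda_{\min}(B_k)+\mu_k)\|\bar d^k\|^2 \le C\|\bar d^k\|$, so $\bar d^k \to 0$ at rate $O(1/\mu_k)$. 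Then I pass to a subsequence along which $s^k := \bar d^k/\|\bar d^k\| \to s$.

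Next, I divide the key inequality by $\|\bar d^k\|$: the quadratic term becomes $\tfrac12[\|\bar d^k\|(s^k)^T B_k s^k + \mu_k \|\bar d^k\|]$, whose first summand vanishes, so that the decisive quantity is $\mu_k\|\bar d^k\|$. To control it, Fermat's rule applied to the minimizer $\bar d^k$ gives $u^k := -\nabla f(\overline{x}) - (B_k + \mu_k I)\bar d^k \in \partial\varphi(\overline{x}+\bar d^k)$. Since $\overline{x}+\bar d^k \to \overline{x}$, Proposition \ref{prop:subdifferential}(b) forces $\{u^k\}$ to be bounded, so $\{(B_k+\mu_k I)\bar d^k\}$ is bounded; combined with $B_k\bar d^k\to 0$, this yields boundedness of $\mu_k\|\bar d^k\|$. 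Along a further subsequence $\mu_k\|\bar d^k\| \to \mu^* \ge 0$.

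The main obstacle is proving $\mu^*>0$. If $\mu^*=0$, then $(B_k+\mu_k I)\bar d^k\to 0$ and hence $u^k\to -\nabla f(\overline{x})$; the closedness of the subdifferential (Proposition \ref{prop:subdifferential}(c)) then forces $-\nabla f(\overline{x}) \in \partial\varphi(\overline{x})$, contradicting the nonstationarity of $\overline{x}$. Hence $\mu^* > 0$.

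To finish, I pass to the limit in the divided inequality. For the $\varphi$-difference quotient I invoke the standard fact that for a real-valued convex $\varphi$ one has $(\varphi(\overline{x}+t_k z_k) - \varphi(\overline{x}))/t_k \to \varphi'(\overline{x};s)$ whenever $t_k\downarrow 0$ and $z_k\to s$; this follows by splitting $z_k = s + (z_k-s)$ and combining the local Lipschitz continuity of the real-valued convex function $\varphi$ with the usual monotone limit of the difference quotient along the fixed direction $s$. Applying this with $t_k = \|\bar d^k\|$ and $z_k = s^k$, the limit of the divided inequality reads
\begin{equation*}
  \nabla f(\overline{x})^T s + \tfrac12\mu^* + \varphi'(\overline{x};s) \le 0,
\end{equation*}
and therefore $\psi'(\overline{x};s) = \nabla f(\overline{x})^T s + \varphi'(\overline{x};s) \le -\tfrac12\mu^* < 0$, which is the claim.
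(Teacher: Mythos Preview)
Your argument is correct. Both your proof and the paper's rely on the same core ingredients---Fermat's rule for the subproblem, boundedness and closedness of $\partial\varphi$, and the nonstationarity of $\overline{x}$---but they are organized differently. The paper invokes the estimate $\psi'(\overline{x};\bar d^k)\le -(\bar d^k)^T(B_k+\mu_k I)\bar d^k$ from \cite[Proposition~2.4]{lee2014proximal}, then uses positive homogeneity and continuity of $\psi'(\overline{x};\cdot)$ together with the lower bound $\|\bar d^k\|\ge \|\nabla f(\overline{x})+u^k\|/(\|B_k\|+\mu_k)$ to obtain the explicit inequality $\psi'(\overline{x};s)\le -\|\nabla f(\overline{x})+\overline{u}\|$. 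You instead divide the raw optimality inequality $\hat q_k(\bar d^k)\le \hat q_k(0)$ by $\|\bar d^k\|$, extract a subsequential limit $\mu^*$ of $\mu_k\|\bar d^k\|$, and rule out $\mu^*=0$ by the same closedness/nonstationarity contradiction, arriving at $\psi'(\overline{x};s)\le -\tfrac12\mu^*$. Your route is more self-contained (no external citation, no appeal to sublinearity of the directional derivative), at the price of a less explicit final constant; the paper's route yields a sharper quantitative bound with slightly less bookkeeping on subsequences.
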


\begin{proof}
	Using the previous result, we get $\bar d^k\to 0$. Furthermore, using Fermat's rule, we obtain
	\begin{equation}\label{eq:techproof1}
		0=\nabla f(\overline x)+(B_k+\mu_kI)\bar d^k+u^k
	\end{equation}
	for some $u^k\in\partial \varphi(\overline{x}+\bar d^k)$. The boundedness of the subdifferential (Proposition~\ref{prop:subdifferential} (b)) yields that the sequence $\{u^k\}$ is bounded. Thus, we can choose a subsequence $K\subset\mathbb{N}$ such that
	$$
		\frac{\bar d^k}{\|\bar d^k\|}\to_K s\qquad\text{and}\qquad u^k\to_K \overline{u}.
	$$
	The closedness of the subdifferential (Proposition \ref{prop:subdifferential} (c)) yields $\overline{u}\in\partial \varphi(\overline{x})$. By assumption,
	we therefore have $\nabla f(\overline{x})+\overline{u}\neq 0$.
	
	Furthermore, using the results of \cite[Proposition 2.4]{lee2014proximal}, see also equation (2.16) in that paper, we obtain
	$$
		\psi'(\overline{x},\bar d^k)\leq-(\bar d^k)^T(B_k+\mu_kI)\bar d^k
		\leq - \big( \lambda_{\min} (B_k) + \mu_k \big) \| \bar d^k \|^2.
	$$
	Since \eqref{eq:techproof1} implies 
	$\|\nabla f(\overline{x})+u^k\|=\|(B_k+\mu_kI)\bar d^k\|\leq (\|B_k\|+\mu_k)\|\bar d^k\|$, we get
	\begin{equation*}
		\psi' ( \overline{x},\bar d^k ) \leq -\big(\lambda_{\min}(B_k)+\mu_k\big)\|\bar d^k\|^2\leq -\|\nabla f(\overline{x})+u^k\|\cdot\frac{\lambda_{\min}(B_k)+\mu_k}{\|B_k\|+\mu_k}\cdot \|\bar d^k\|.
	\end{equation*}
	Thus, the sublinearity of $\psi'(\overline{x},\cdot)$ yields
	$$
	\psi'\big(\overline{x},\frac{\bar d^k}{\|\bar d^k\|}\big) \leq -\|\nabla f(\overline{x})+u^k\|\cdot \frac{\lambda_{\min}(B_k)+\mu_k}{\|B_k\|+\mu_k}.
	$$
	For $k\to_K\infty$, the right-hand side converges to $-\|\nabla f(\overline{x})+\overline{u}\|$. 
	Since $\varphi$ is real-valued, the directional derivative $\psi'(\overline{x},\cdot)$ is continuous, and we obtain
	\begin{equation*}
	\psi'(\overline{x},s)=\underset{K\ni k\to\infty}{\lim} \psi'\Big(\overline{x},\frac{\bar d^k}{\|\bar d^k\|}\Big)\leq -\|\nabla f(\overline{x})+u\|<0.
	\end{equation*}
	This completes the proof.
\end{proof}
\bigskip

\noindent 
We now apply the previous result to show that there always exist infinitely many successful
or highly successful iterations.

\begin{lem}\label{lem:inf_successful}
    Let $ \{ B_k \} $ be a bounded sequence of symmetric matrices.
	Then Algorithm~\ref{alg:rpqnm} performs infinitely many successful or highly successful steps.
\end{lem}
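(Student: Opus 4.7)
The plan is to argue by contradiction: suppose Algorithm~\ref{alg:rpqnm} executes only finitely many (highly) successful iterations. Then there exists an index $k_0$ such that every iteration $k\geq k_0$ is unsuccessful, hence $x^k\equiv x^{k_0}=:\overline x$ and $\mu_{k+1}=\sigma_2\mu_k$, so $\mu_k\to\infty$. By our standing assumption none of the iterates is stationary, so $\overline x$ is not stationary, i.e., $\|r(\overline x)\|>0$. Since $\{B_k\}$ is bounded and $\mu_k\to\infty$, the matrix $B_k+\mu_kI$ is eventually positive definite, so the subproblem \eqref{eq:algorithm-subproblem} has a unique solution $d^k=r_{B_k+\mu_kI}(\overline x)$, and Proposition~\ref{prop:dtozero} gives $d^k\to 0$.

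The core of the argument is a lower estimate on $\pred_k$. From Fermat's rule applied to $\hat q_k$ there exists $u^k\in\partial\varphi(\overline x+d^k)$ with $u^k=-\nabla f(\overline x)-(B_k+\mu_kI)d^k$. Convexity of $\varphi$ yields $\varphi(\overline x)-\varphi(\overline x+d^k)\geq -(u^k)^Td^k=\nabla f(\overline x)^Td^k+(d^k)^T(B_k+\mu_kI)d^k$, which when inserted into the definition of $\pred_k$ gives
\[
  \pred_k\;\geq\;\tfrac12(d^k)^TB_kd^k+\mu_k\|d^k\|^2\;\geq\;\tfrac{\mu_k}{2}\|d^k\|^2
\]
for all sufficiently large $k$, using boundedness of $\{B_k\}$ and $\mu_k\to\infty$. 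Lemma~\ref{lem:limit} applied to the constant sequence $x^k\equiv\overline x$ gives $\limsup_k\|r(\overline x)\|/(\mu_k\|d^k\|)\leq 1$, which, together with $\|r(\overline x)\|>0$, shows that $\mu_k\|d^k\|$ is bounded away from zero; more precisely, for any $\varepsilon>0$ we have $\mu_k\|d^k\|\geq\|r(\overline x)\|/(1+\varepsilon)$ eventually.

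I would then use these two facts to rule out both sources of unsuccessfulness. First, combining the two estimates, $\pred_k\geq\tfrac12(\mu_k\|d^k\|)\|d^k\|\geq\tfrac{1}{2(1+\varepsilon)}\|r(\overline x)\|\,\|d^k\|$. Since $p_{\min}<\tfrac12$, one can pick $\varepsilon$ so small that $\tfrac{1}{2(1+\varepsilon)}>p_{\min}$; then condition \eqref{eq:pred_condition} fails for all large $k$, so the algorithm proceeds to~(S.3). Second, by the $C^1$ Taylor expansion at $\overline x$ one has
\[
  \ared_k-\pred_k\;=\;\tfrac12(d^k)^TB_kd^k+o(\|d^k\|),
\]
so $\ared_k-c_1\pred_k=(1-c_1)\pred_k+\tfrac12(d^k)^TB_kd^k+o(\|d^k\|)$. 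The leading term $(1-c_1)\pred_k\geq\tfrac{(1-c_1)\mu_k}{2}\|d^k\|^2$ dominates both the quadratic correction (of order $\|B_k\|\|d^k\|^2$) and the $o(\|d^k\|)$ term: the first is controlled because $\mu_k\to\infty$ and $\{B_k\}$ is bounded; the second because $o(\|d^k\|)/(\mu_k\|d^k\|^2)=o(1)/(\mu_k\|d^k\|)\to 0$, again thanks to $\mu_k\|d^k\|$ being bounded away from zero. Hence $\rho_k>c_1$ for all large $k$, contradicting unsuccessfulness.

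The main obstacle is precisely the $o(\|d^k\|)$ remainder in the Taylor expansion: a priori this is of larger order than the quadratic lower bound $\tfrac{\mu_k}{2}\|d^k\|^2$ on $\pred_k$, so without an additional fact the estimate would fail. The key insight is that the lower bound $\mu_k\|d^k\|\geq c>0$ furnished by Lemma~\ref{lem:limit} (which uses nonstationarity of $\overline x$) exactly compensates this and makes the remainder negligible against $\mu_k\|d^k\|^2$.
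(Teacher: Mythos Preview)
Your argument is correct, and the overall architecture (contradiction, $x^k\equiv\overline x$, $\mu_k\to\infty$, Lemma~\ref{lem:limit} to dispose of \eqref{eq:pred_condition}) matches the paper. The route you take to rule out $\rho_k\le c_1$, however, is genuinely different from the paper's.

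The paper does \emph{not} estimate $\ared_k-c_1\pred_k$ directly. Instead it expands the inequality $\ared_k\le c_1\pred_k$, divides by $\|d^k\|$, passes to a subsequence with $d^k/\|d^k\|\to s$, and obtains $\psi'(\overline x;s)\ge c_1\,\psi'(\overline x;s)$, hence $\psi'(\overline x;s)\ge 0$. This contradicts Lemma~\ref{lem:descentdirection}, which shows that any such limit direction is a strict descent direction. Your proof avoids Lemma~\ref{lem:descentdirection} entirely: the Taylor remainder $o(\|d^k\|)$ is controlled against the quadratic lower bound $\tfrac{\mu_k}{2}\|d^k\|^2$ precisely because $\mu_k\|d^k\|$ stays bounded away from zero (the lower bound coming from Lemma~\ref{lem:limit} and $\|r(\overline x)\|>0$). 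This is the key and nonobvious step, and you identified it correctly. Your approach is more self-contained and quantitative; the paper's is more geometric and reuses the descent-direction lemma, which has independent interest.

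Two minor remarks. First, your derivation of $\pred_k\ge\tfrac12(d^k)^TB_kd^k+\mu_k\|d^k\|^2$ via the subgradient inequality is fine, but the paper obtains $\pred_k\ge\tfrac{\mu_k}{2}\|d^k\|^2$ more directly from $\hat q_k(d^k)\le\hat q_k(0)$, without the ``for large $k$'' qualifier. Second, since $x^k\equiv\overline x$ is fixed, the $o(\|d^k\|)$ in your Taylor expansion is indeed uniform in $k$ (it comes from differentiability of $f$ at the single point $\overline x$), so there is no hidden issue there.
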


\begin{proof}
	We follow the proof of \cite{steck2019regularization} and assume, by contradiction, that there exists $k_0\in\mathbb{N}$ such that all steps $k\geq k_0$ are unsuccessful. This implies $x^k=x^{k_0}$ for all $k\geq k_0$ and, due to the implicit assumption that Algorithm~\ref{alg:rpqnm} generates an infinite sequence,
that $ \mu_k \to + \infty $. Since $\{B_k\}$ is a bounded sequence, the matrices $B_k+\mu_kI$ are therefore positive definite for all sufficiently large $k$. In view of Lemma~\ref{lem:dequals0} and $ d^k \neq 0 $ (otherwise we would have stopped after finitely many iterations), it follows that $x^{k_0}$ is a nonstationary point of $\psi$, i.e., $r(x^{k_0})\neq 0$. Moreover, the positive definiteness of $B_k+\mu_kI$ also guarantees that the search directions $d^k$ are well-defined. In view of Lemma~\ref{lem:limit}, we have
	$$
		\frac{\| r(x^k) \|}{\|d^k\|\mu_k} < \frac{1}{2p_{\min}}
	$$
	for sufficiently large $k$ (recall that $p_{\min} < \tfrac 12$ and $ d^k = r_{B_k + \mu_k I} 
	(x^k) $). Using $\hat{q}_k(d^k)\leq \hat{q}_k(0)$, we then obtain
	\begin{align}
	\pred_k &=\psi(x^k)-q_k(d^k)=\psi(x^k)-\hat{q}_k(d^k)+\frac{\mu_k}{2}\|d^k\|^2\notag\\
	&\geq \psi(x^k)-\hat{q}_k(0)+\frac{\mu_k}{2}\|d^k\|^2 =\frac{\mu_k}{2}\|d^k\|^2
	 > p_{\min}\|r(x^k)\|\cdot \|d^k\|. \label{eq:pred_holds}
	\end{align}
	Hence, for all sufficiently large $k$, Algorithm \ref{alg:rpqnm} performs (S.3). Since all iterations $k\geq k_0$ are unsuccessful, this means $\operatorname{ared}_k\leq c_1\pred_k$. It follows that
	$$
	\psi(x^{k_0}+d^k)-\psi(x^{k_0}) \geq c_1\big(\nabla f(x^{k_0})^Td^k+\varphi(x^{k_0}+d^k)-\varphi(x^{k_0})+\tfrac 12 (d^k)^TB_kd^k\big).
	$$
	Setting $t_k=\|d^k\|$ and dividing this estimate by $t_k$ yields
	\begin{align*}
	&\frac{\psi(x^{k_0}+t_k\tfrac{d^k}{\|d^k\|})-\psi(x^{k_0})}{t_k} \\
	&\qquad\geq c_1\bigg( \nabla f(x^{k_0})^T\frac{d^k}{\|d^k\|}+\frac{\varphi(x^{k_0}+t_k\tfrac{d^k}{\|d^k\|})-\varphi(x^{k_0})}{t_k}+\frac 12 \frac{(d^k)^T}{\|d^k\|}B_kd^k\bigg).
	\end{align*}
	Choosing a subsequence $K$ such that $d^k/\|d^k\|\to s$, and using the local Lipschitz continuity of $\psi$, the left-hand side converges to the directional derivative $\psi'(x^{k_0};s)$ when taking the limit in $K$. In the same way, the limit of the second term on the right-hand side converges to $\varphi'(x^{k_0};s)$. 
	Thus, using $d^k\to 0$, see Proposition~\ref{prop:dtozero}, and the boundedness of $\{B_k\}$, taking the limit on $K$ in the entire estimate gives $\psi'(x^{k_0};s)\geq c_1 \psi'(x^{k_0};s)$. Since $ c_1 \in (0,1) $, this yields $ \psi'(x^{k_0};s) \geq 0 $, a contradiction to Lemma~\ref{lem:descentdirection}. This shows that there are infinitely many successful or highly successful iterations.
\end{proof}

\noindent 
We next formulate two global convergence results. The corresponding statements are similar
to those known for trust-region methods in, e.g., unconstrained optimization.

\begin{thm}\label{thm:global1}
Let $ \{ B_k \} $ be a bounded sequence of symmetric matrices, and assume that $ \psi $ 
is bounded from below. Then any sequence $ \{ x^k \} $ generated by the regularized proximal Newton-type method (Algorithm~\ref{alg:rpqnm})
satisfies $\lim\inf_{k\to\infty}\| r(x^k) \|=0$.
\end{thm}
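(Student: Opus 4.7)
My plan is a proof by contradiction in the style of classical trust-region arguments. Assume $\liminf_{k\to\infty}\|r(x^k)\|>0$, so that $\|r(x^k)\|\geq\varepsilon$ for some $\varepsilon>0$ and all $k\geq K_0$. Because Lemma~\ref{lem:inf_successful} yields an infinite set $\mathcal{S}$ of successful or highly successful iterations, and (S.3) is only reached when the pred condition \eqref{eq:pred_condition} holds, for every $k\in\mathcal{S}$ we get
$$
\psi(x^k)-\psi(x^{k+1})=\ared_k\geq c_1\pred_k > c_1 p_{\min}\varepsilon\,\|d^k\|.
$$
Monotone non-increase of $\{\psi(x^k)\}$ together with the lower bound on $\psi$ telescopes to $\sum_{k\in\mathcal{S}}\|d^k\|<\infty$. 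Since unsuccessful iterations leave the iterate fixed and $\|x^{k+1}-x^k\|=\|d^k\|$ on $\mathcal{S}$, the sequence $\{x^k\}$ is Cauchy and converges to some $\overline{x}$; continuity of $r(\cdot)$ yields $\|r(\overline{x})\|\geq\varepsilon>0$, so $\overline{x}$ is nonstationary.

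The first technical task is to show that $\{\mu_k\}$ remains bounded. Since $x^k\to\overline{x}$, the iterates eventually lie in a compact set on which $\nabla f$ is Lipschitz with some constant $L$, and $\|B_k\|\leq B$ by assumption. A Taylor estimate for $f$ gives $|\ared_k-\pred_k|\leq\tfrac12(L+B)\|d^k\|^2$, while $\hat{q}_k(d^k)\leq\hat{q}_k(0)$ implies $\pred_k\geq\tfrac{\mu_k}{2}\|d^k\|^2$, as already observed in~\eqref{eq:pred_holds}. Dividing yields
$$
|\rho_k-1|\leq\frac{L+B}{\mu_k},
$$
so once $\mu_k$ exceeds a threshold $\hat\mu$ depending only on $L,B,c_2$, the iteration is forced to be highly successful and $\mu_{k+1}=\sigma_1\mu_k<\mu_k$. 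Because $\mu_k$ can grow by at most a factor $\sigma_2$ per step, this yields a uniform bound $\mu_k\leq\max(\mu_{K_1},\sigma_2\hat\mu)$. Lemma~\ref{lem:limit}, applied at the nonstationary limit $\overline{x}$, guarantees that the pred condition \eqref{eq:pred_condition} itself cannot fail once $\mu_k$ is large enough, so the above applies without caveat.

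With $M_k:=B_k+\mu_k I$ uniformly bounded in norm, the final step connects $r(x^k)$ and $d^k$. Fermat's rule for \eqref{eq:algorithm-subproblem} gives $-\nabla f(x^k)-M_k d^k\in\partial\varphi(x^k+d^k)$, and Proposition~\ref{prop:proximity}(b) yields $-\nabla f(x^k)-r(x^k)\in\partial\varphi(x^k+r(x^k))$. Applying monotonicity of $\partial\varphi$ to these two inclusions and expanding produces the quadratic inequality
$$
\|r(x^k)\|^2\leq (1+\|M_k\|)\|d^k\|\,\|r(x^k)\|+\|M_k\|\,\|d^k\|^2,
$$
from which $\|r(x^k)\|\leq\kappa\|d^k\|$ for a constant $\kappa$ depending only on the bound on $\|M_k\|$. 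Since $\|d^k\|\to 0$ on $\mathcal{S}$, this forces $\|r(x^k)\|\to 0$ along $\mathcal{S}$, contradicting $\|r(x^k)\|\geq\varepsilon$ and completing the proof.

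The main obstacle is the boundedness of $\{\mu_k\}$: we cannot simply invoke Lemma~\ref{lem:tseng} to compare $\|r(x^k)\|$ with $\|d^k\|$ because $B_k+\mu_k I$ may be indefinite, and the local Lipschitzness of $\nabla f$ used to control $|\rho_k-1|$ only becomes available after the Cauchy argument has pinned the iterates down. Routing the bound $\|r(x^k)\|\leq\kappa\|d^k\|$ through monotonicity of the subdifferential rather than through an eigenvalue estimate is the key technical workaround.
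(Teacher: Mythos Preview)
Your overall strategy is sound and close in spirit to the paper's, but there is one genuine gap. You assert that ``the iterates eventually lie in a compact set on which $\nabla f$ is Lipschitz with some constant $L$'', and then use the Taylor bound $|\ared_k-\pred_k|\leq\tfrac12(L+B)\|d^k\|^2$ to obtain $|\rho_k-1|\leq(L+B)/\mu_k$. The paper, however, assumes only that $f$ is continuously differentiable, and $C^1$ functions need \emph{not} have locally Lipschitz gradients; this step is therefore unjustified. The paper circumvents the issue by using the mean value theorem together with the \emph{linear} lower bound $\pred_k\geq p_{\min}\varepsilon\|d^k\|$ (available once the pred condition holds): writing $f(x^k+d^k)-f(x^k)=\nabla f(\xi^k)^Td^k$ one obtains
\[
|\rho_k-1|\;\leq\;\frac{\|\nabla f(\xi^k)-\nabla f(x^k)\|}{p_{\min}\varepsilon}\;+\;\frac{\|B_k\|\,\|d^k\|}{2p_{\min}\varepsilon},
\]
and both terms vanish as $\|d^k\|\to 0$ by mere continuity of $\nabla f$. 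Your threshold argument for bounding $\mu_k$ can be repaired exactly along these lines, since large $\mu_k$ forces $\|d^k\|$ small (compare Proposition~\ref{prop:dtozero}), and then uniform continuity of $\nabla f$ on a neighbourhood of $\overline{x}$ does the job. (A minor related point: your appeal to Lemma~\ref{lem:limit} to handle the pred condition is slightly off, since that lemma is stated under the hypothesis $\mu_k\to\infty$; what you actually want is the underlying eigenvalue estimate from Lemma~\ref{lem:tseng}, which gives the threshold statement directly once $B_k+\mu_kI\succ 0$.)

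Apart from this, your organization differs from the paper's. The paper proceeds by a case split on whether $\{\mu_k\}_{\mathcal S}$ is bounded: in the bounded case it passes to the limit in the Fermat relation $0=\nabla f(x^k)+(B_k+\mu_kI)d^k+u^k$ and invokes closedness of $\partial\varphi$ to contradict nonstationarity of $\overline{x}$; in the unbounded case it shows $\rho_k\to 1$ and contradicts the existence of infinitely many unsuccessful iterations. You instead first establish a uniform bound on $\mu_k$ and then use monotonicity of $\partial\varphi$ to derive $\|r(x^k)\|\leq\kappa\|d^k\|$ explicitly. Your monotonicity argument is a clean alternative to the paper's limit passage and, as you note, avoids any positive-definiteness requirement on $B_k+\mu_kI$; the trade-off is that you need the explicit bound on $\mu_k$ up front, which is precisely where the Lipschitz issue bites.
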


\begin{proof}
	Let $\mathcal{S}\subset\mathbb{N}$ be the (infinite) set of successful or highly successful iterations. Contrary to the claim, assume that $\lim\inf_{k\to\infty} \|r(x^k)\|>0$. 
	Then there exists $k_0\in\mathbb{N}$ and $\varepsilon>0$ such that $ \|r(x^k)\| \geq \varepsilon$
	for all $ k \geq k_0 $. By the definition of successful steps, we get
	$$
	\psi(x^k)-\psi(x^{k+1})\geq c_1 \pred_k\geq p_{\min} c_1\|d^k\|\cdot \|r(x^k)\| \geq p_{\min}c_1\varepsilon\|d^k\|
	$$
	for all $k\in\mathcal{S},k\geq k_0$. Since $\psi$ is bounded from below, summation yields
	$$
	\infty>\sum_{k=0}^\infty \big[\psi(x^k)-\psi(x^{k+1})\big]=\sum_{k\in\mathcal{S}} \big[\psi(x^k)-\psi(x^k+d^k)\big]\geq p_{\min}c_1\varepsilon \sum_{k\in\mathcal{S}}\|d^k\|.
	$$
	Taking into account that $x^k$ is not updated in unsuccessful steps, it follows that
	$$
	\infty > \sum_{k\in\mathcal{S}} \|d^k\| = \sum_{k\in\mathcal{S}}\|x^{k+1}-x^k\|=\sum_{k=0}^\infty \|x^{k+1}-x^k\|.
	$$
	Hence, $\{x^k\}$ is a Cauchy sequence and therefore convergent to some $\overline{x}\in\mathbb{R}^n$. Since $\|r(\overline{x})\|=\lim_{k\to \infty}\|r(x^k)\|\geq\varepsilon$, $\overline{x}$ is not a stationary point of $\psi$.
	
	By Lemma \ref{lem:inf_successful}, there are infinitely many successful or highly successful steps
	and, as shown above, we have $\|d^k\|\to_{\mathcal{S}}0$. Similar to \eqref{eq:techproof1} there holds
	$$
	0=\nabla f(x^k)+(B_k+\mu_kI)d^k+u^k
	$$
	for some $u^k\in\partial\varphi(x^k+d^k)$. Assuming that $\{\mu_k\}_{\mathcal{S}}$ is bounded, $(B_k+\mu_k I)d^k$ converges to $0$ for $ k \to_{\mathcal{S}} \infty $. Furthermore, Proposition \ref{prop:subdifferential} (b), (c) yields that $\{u^k\}_{\mathcal{S}} $ is bounded and we can choose a subsequence $K\subset\mathcal{S}$ such that $u^k\to_K\overline{u}$ with $\overline{u}\in\partial\varphi(\overline{x})$.
	Taking the limit $K\ni k\to\infty$ in the above equation 
	then yields $0=\nabla f(\overline{x})+\overline{u}\in\nabla f(\overline{x})+\partial\varphi(\overline{x})$, in contradiction to the nonstationarity of $\overline{x}$.
	
	Hence, without loss of generality, we have $\{\mu_k\}_{\mathcal{S}}\to\infty$. It follows that $\{\mu_k\}\to\infty$ since $\mu_k$ cannot decrease during unsuccessful iterations. This implies that Algorithm~\ref{alg:rpqnm} also performs infinitely many unsuccessful iterations. On the other hand, in the same way as \eqref{eq:pred_holds}, we get
	$$
		\pred_k\geq p_{\min}\|d^k\|\cdot \|r(x^k)\| \geq p_{\min}\varepsilon \|d^k\|
	$$
for sufficiently large $k$. For every such $k$, there exists $\xi^k$ on the straight line between $x^k$ and $x^k+d^k$ such that $f(x^k+d^k)-f(x^k)=\nabla f(\xi^k)^Td^k$. By the convergence of $\{x^k\}$ to $ \overline{x} $ and since $\{d^k\} \to 0 $ in view of Proposition~\ref{prop:dtozero}, the sequence $\{\xi^k\}$ also converges to $\overline{x}$. Thus, we obtain
	\begin{align*}
	\big|\rho_k-1\big| &=\Big|\frac{\ared_k}{\pred_k}-1\Big| = \Big|\frac{\psi(x^k)-\psi(x^k+d^k)}{\psi(x^k)-q_k(d^k)}-1\Big|\\
	&=\Big|\frac{\psi(x^k+d^k)-q_k(d^k)}{\psi(x^k)-q_k(d^k)}\Big|\\
	&\leq \frac{1}{p_{\min}\varepsilon}\ \frac{\big| f(x^k+d^k)-f(x^k)-\nabla f(x^k)^T d^k\big|+\tfrac 12\big|(d^k)^T B_kd^k\big|}{\|d^k\|}\\
	&\leq \frac{1}{p_{\min}\varepsilon}\ \frac{\big| \nabla f(\xi^k)^T d^k-\nabla f(x^k)^Td^k\big|}{\|d^k\|}+\frac{1}{2p_{\min}\varepsilon} \bigg| (d^k)^TB_k\frac{d^k}{\|d^k\|} \bigg| \longrightarrow 0
	\end{align*}
	for $k\to\infty$. Hence, $\{\rho_k\}\to 1$, i.e., eventually all steps are successful or
	highly successful, which yields a contradiction.
\end{proof}

\noindent
Similar to trust-region methods, the previous result can be used to prove a stronger statement for  functions with a uniformly continuous gradient. The proof generalizes the one 
of \cite[Theorem 3.5]{steck2019regularization}.

\begin{thm}\label{thm:global2}
Let $ \{ B_k \} $ be a bounded sequence of symmetric matrices, assume that $\psi$ is bounded from below and that $\nabla f$ is uniformly continuous on a set $X$ satisfying $\{x^k\}\subset X$, where
$ \{ x^k \} $ denotes a sequence generated by Algorithm \ref{alg:rpqnm}. Then 
$\lim_{k\to\infty}\| r(x^k) \|=0$ holds; in particular, every accumulation point of $\{x^k\}$ is a stationary point of $\psi$.
\end{thm}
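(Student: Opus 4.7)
The plan is to argue by contradiction, bootstrapping from Theorem~\ref{thm:global1}, which already furnishes $\liminf_{k\to\infty}\|r(x^k)\|=0$. If the stronger conclusion fails, then $\limsup_{k\to\infty}\|r(x^k)\|>0$, so there exist $\varepsilon>0$ and subsequences along which $\|r(x^k)\|$ oscillates between a value $\geq 2\varepsilon$ and a value $<\varepsilon$. The classical trust-region trick is to extract indices $k_i<\ell_i$ such that $\|r(x^{k_i})\|\geq 2\varepsilon$, $\|r(x^{\ell_i})\|<\varepsilon$, and $\|r(x^j)\|\geq\varepsilon$ for all $j\in[k_i,\ell_i)$, and then to derive a contradiction by showing that $x^{k_i}-x^{\ell_i}\to 0$ while, by uniform continuity, $r(x^{k_i})-r(x^{\ell_i})$ would have to tend to zero as well.

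First I would establish uniform continuity of $r(\cdot)$ on $X$. Since $\prox_\varphi$ is nonexpansive in the Euclidean norm (Proposition~\ref{prop:proximity}(a) with $H=I$), we have
\[
\|r(x)-r(y)\|\leq \|\prox_\varphi(x-\nabla f(x))-\prox_\varphi(y-\nabla f(y))\|+\|x-y\|\leq 2\|x-y\|+\|\nabla f(x)-\nabla f(y)\|,
\]
so uniform continuity of $\nabla f$ on $X$ transfers to $r$. Next, exactly as in the proof of Theorem~\ref{thm:global1}, the fact that $\psi$ is bounded from below together with the decrease estimate
\[
\psi(x^k)-\psi(x^{k+1})\geq c_1\,\pred_k\geq c_1 p_{\min}\,\|d^k\|\cdot\|r(x^k)\|\qquad(k\in\mathcal{S})
\]
yields the summability $\sum_{k\in\mathcal{S}}\|d^k\|\cdot\|r(x^k)\|<\infty$. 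In particular, for the subsequence $\{k_i\},\{\ell_i\}$ above, the tail sums
\[
\sum_{\substack{j\in\mathcal{S}\\k_i\leq j<\ell_i}}\|d^j\|\cdot\|r(x^j)\|\;\longrightarrow\;0,
\]
and since $\|r(x^j)\|\geq\varepsilon$ on this window, the same tail behaves like $\varepsilon\sum_{j\in\mathcal{S}\cap[k_i,\ell_i)}\|d^j\|$, which therefore also tends to zero.

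Then I would combine the two ingredients. Because $x^{j+1}=x^j$ at unsuccessful steps and $x^{j+1}=x^j+d^j$ at successful ones, the triangle inequality gives
\[
\|x^{\ell_i}-x^{k_i}\|\leq\sum_{\substack{j\in\mathcal{S}\\k_i\leq j<\ell_i}}\|d^j\|\;\longrightarrow\;0,
\]
so by the uniform continuity of $r$ we conclude $\|r(x^{\ell_i})-r(x^{k_i})\|\to 0$. This is the desired contradiction, since by construction $\|r(x^{k_i})-r(x^{\ell_i})\|\geq 2\varepsilon-\varepsilon=\varepsilon$ for all $i$. Hence $\lim_{k\to\infty}\|r(x^k)\|=0$, and the stationarity of any accumulation point follows from the characterization in \eqref{eq:stationarypoint} together with continuity of $r$.

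The main obstacle I anticipate is the bookkeeping in the subsequence extraction, specifically guaranteeing that the intermediate indices $j\in[k_i,\ell_i)$ indeed satisfy $\|r(x^j)\|\geq\varepsilon$; this is handled by choosing $\ell_i$ as the \emph{first} index after $k_i$ for which $\|r(x^{\ell_i})\|<\varepsilon$. A secondary subtlety is that the summability argument controls only \emph{successful} iterations, but this exactly matches the decomposition of $x^{\ell_i}-x^{k_i}$, since $x$ is frozen at unsuccessful steps. Once these two observations are in place, uniform continuity of $r$ closes the argument cleanly.
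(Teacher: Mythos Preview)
Your proposal is correct and follows essentially the same trust-region-style argument as the paper: extract windows $[k_i,\ell_i)$ on which $\|r(x^j)\|\geq\varepsilon$, use the sufficient-decrease inequality together with boundedness of $\psi$ from below to force $\|x^{\ell_i}-x^{k_i}\|\to 0$, and then invoke uniform continuity of $r$ to reach a contradiction. The only cosmetic difference is that you pass through the global summability $\sum_{k\in\mathcal{S}}\|d^k\|\,\|r(x^k)\|<\infty$ and take tail sums, whereas the paper bounds $\|x^{\ell(k)}-x^k\|$ directly by $(c_1 p_{\min}\delta)^{-1}\bigl(\psi(x^k)-\psi(x^{\ell(k)})\bigr)$; these are equivalent rearrangements of the same estimate.
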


\begin{proof}
	Assume, by contradiction, that there exists $\delta>0$ and $K\subset\mathbb{N}$ such that $\|r(x^k)\|\geq 2\delta$ for all $k\in K$. By Theorem \ref{thm:global1}, for each $k\in K$, there is an index $\ell(k)>k$ such that $\|r(x^l)\|\geq\delta$ for all $k\leq l<\ell(k)$ and $\|r(x^{\ell(k)})\|<\delta$.
	
	If, for $k\in K$, an iteration $k\leq l<\ell(k)$ is successful or highly successful, we get
	$$
	\psi(x^l)-\psi(x^{l+1})\geq c_1\pred_l\geq 
        c_1 p_{\min} \| r(x^l) \| \cdot \| d^l \| \geq c_1 p_{\min} \delta \|x^{l+1}-x^l\|.
	$$
	For unsuccessful iterations $l$, this estimate holds trivially. Thus,
	$$
	p_{\min}c_1 \delta \|x^{\ell(k)}-x^k\|\leq p_{\min}c_1 \delta \sum_{l=k}^{\ell(k)-1}\|x^{l+1}-x^l\|\leq \sum_{l=k}^{\ell(k)-1} \psi(x^l)-\psi(x^{l+1}) = \psi(x^k)-\psi(x^{\ell(k)})
	$$
	holds for all $k\in K$. By assumption, $\psi$ is bounded from below, and by construction, the sequence $ \{ \psi (x^k) \} $ is monotonically decreasing, hence convergent. This implies
$\big\{\psi(x^k)-\psi(x^{\ell(k)})\big\}\to_K 0$. Hence, we get $\big\{\|x^{\ell(k)}-x^k\|\big\}\to_K 0$. The uniform continuity of $\nabla f$ and of the proximity operator (Proposition \ref{prop:proximity} (a)) together with the fact that the composition of uniformly continuous functions is uniformly continuous, yields the uniform continuity of the residual function $r(\cdot)$. Thus, we get $\big\{\|r(x^{\ell(k)})-r(x^k)\|\big\}\to_K 0$. On the other hand, by the choice of $\ell(k)$, we have
	$$
	\big\|r(x^k)-r(x^{\ell(k)})\big\|\geq \big\|r(x^k)\big\|-\big\|r(x^{\ell(k)})\big\|\geq 2\delta-\delta\geq \delta,
	$$
	which yields the desired contradiction.
\end{proof}

\section{Convergence Using an Error Bound Condition}\label{sec:error-bound-conv}

The aim of this section is to provide further convergence results for the regularized proximal quasi-Newton method in Algorithm \ref{alg:rpqnm}. To this end, we start with some technical results and then assume that $\nabla f$ is Lipschitz continuous to show the boundedness of the sequence $\{\mu_k\}$. Together with an error bound condition, we then deduce the convergence of the entire sequence. 
We start with some technical results.

\begin{lem}\label{50-lem:tech-estimates}
	Assume that the sequence $\{H_k\}$ is uniformly bounded and positive definite, i.e.\ there exist constants $0<m\leq M$ such that $mI\preceq H_k\preceq MI$ holds for all $k\geq 0$. Then the following estimates hold:
	\begin{enumerate}
		\item[(a)] $\displaystyle \pred_k\geq \frac 12 (m+2\mu_k)\|d^k\|^2$,
		\item[(b)] $\displaystyle \frac{\|r(x^k)\|}{\|d^k\|}\leq \Big(1+\frac{1}{m+\mu_k}\Big)(M+\mu_k)\leq \frac{m+1}{m}(M+\mu_k)$,
		\item[(c)] $\displaystyle \frac{\|d^k\|}{\|r(x^k)\|}\leq \frac{1+M+\mu_k}{m+\mu_k}\leq\frac{1+M}{m}$.
	\end{enumerate}
\end{lem}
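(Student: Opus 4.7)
All three estimates follow from two standard tools once one observes that the hypothesis $mI\preceq H_k\preceq MI$ must refer to the (unregularized) quasi-Newton matrix $B_k$, so that the matrix driving $\hat q_k$ satisfies $(m+\mu_k)I\preceq B_k+\mu_k I\preceq (M+\mu_k)I$. I would derive (a) from strong convexity of $\hat q_k$, and (b), (c) from two applications of Lemma~\ref{lem:tseng} with the roles of $H$ and $\tilde H$ swapped.

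For (a), the first step is to rewrite the predicted reduction in terms of $\hat q_k$:
$\pred_k=\psi(x^k)-q_k(d^k)=\hat q_k(0)-\hat q_k(d^k)+\tfrac{\mu_k}{2}\|d^k\|^2$,
using $\hat q_k(0)=\psi(x^k)$ and $\hat q_k=q_k+\tfrac{\mu_k}{2}\|\cdot\|^2$. Since $d^k$ is the (unique) minimizer of $\hat q_k$, Fermat's rule (Proposition~\ref{prop:subdifferential}(d)) gives $0\in\partial\hat q_k(d^k)$, and $(m+\mu_k)$\-/strong convexity of $\hat q_k$ then yields $\hat q_k(0)-\hat q_k(d^k)\geq \tfrac{m+\mu_k}{2}\|d^k\|^2$. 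Substituting produces the claimed $\pred_k\geq \tfrac{m+2\mu_k}{2}\|d^k\|^2$.

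For (b), I would apply Lemma~\ref{lem:tseng} with $H=B_k+\mu_k I$ and $\tilde H=I$ to relate $\|r(x^k)\|=\|r_I(x^k)\|$ to $\|d^k\|=\|r_{B_k+\mu_k I}(x^k)\|$; the eigenvalue sandwich provides $\lambda_{\min}\geq m+\mu_k$ and $\lambda_{\max}\leq M+\mu_k$, giving the first inequality. The refined bound $\tfrac{m+1}{m}(M+\mu_k)$ follows from $\mu_k>0$, hence $\tfrac{1}{m+\mu_k}\leq\tfrac{1}{m}$. Part (c) is the mirror image: Lemma~\ref{lem:tseng} with $H=I$, $\tilde H=B_k+\mu_k I$ yields $\tfrac{\|d^k\|}{\|r(x^k)\|}\leq\tfrac{1+M+\mu_k}{m+\mu_k}$, and the simplification to $\tfrac{1+M}{m}$ reduces to the monotonicity claim that $\mu\mapsto\tfrac{1+M+\mu}{m+\mu}$ is non-increasing, which holds because its derivative $\tfrac{m-1-M}{(m+\mu)^2}$ is non-positive given $m\leq M\leq M+1$, so the maximum is attained at $\mu=0$.

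There is no real obstacle here; the only subtlety is recognising that ``$H_k$'' in the hypothesis must denote the quasi-Newton part $B_k$ rather than the regularized matrix $B_k+\mu_k I$, since otherwise the additive factors $m+\mu_k,M+\mu_k,m+2\mu_k$ appearing in the conclusions cannot be recovered. Once this identification is fixed, each estimate is a one\-/line consequence of either strong convexity (for (a)) or a direct invocation of Lemma~\ref{lem:tseng} (for (b) and (c)).
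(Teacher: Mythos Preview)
Your proposal is correct and essentially matches the paper's proof. Parts (b) and (c) are identical to the paper's argument (direct application of Lemma~\ref{lem:tseng} with the eigenvalue bounds $m+\mu_k$ and $M+\mu_k$); for part (a) the paper invokes \cite[Proposition~2.4]{lee2014proximal} to obtain $-(\nabla f(x^k)^Td^k+\varphi(x^k+d^k)-\varphi(x^k))\geq (d^k)^T(H_k+\mu_kI)d^k$ and then subtracts $\tfrac12(d^k)^TH_kd^k$, whereas your strong convexity argument for $\hat q_k$ is the same inequality obtained in a self-contained way---both routes are equivalent reformulations of the first-order optimality of $d^k$.
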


\begin{proof}
	(a) Using \cite[Proposition 2.4]{lee2014proximal}, we get
	\begin{align*}
	\pred_k &=-\big(\nabla f(x^k)^Td^k+\varphi(x^k+d^k)-\varphi(x^k)\big)-\frac 12 (d^k)^TH_kd^k\\
	&\geq (d^k)^T(H_k+\mu_kI)d^k-\frac 12(d^k)^TH_kd^k\\
	&\geq \frac 12(m+2\mu_k)\|d^k\|^2.
	\end{align*}
	(b) and (c) follow directly from Lemma \ref{lem:tseng} using $\lambda_{\max}(H_k+\mu_kI)\leq M+\mu_k$ and $\lambda_{\min}(H_k+\mu_kI)\geq m+\mu_k$. 
\end{proof}

\noindent
The next result is essential to prove the boundedness of the sequence of regularizers $\{\mu_k\}$.

\begin{lem}\label{50-lem:mubig-success}
	Assume that $\nabla f$ is Lipschitz continuous with Lipschitz constant $L>0$ and $H_k\succeq mI$ for some $m>0$. If, in some iterate $x^k$, we have $\mu_k\geq \overline{\mu}:=\max\{L-m,0\}$, there holds $\ared_k>c_1 \pred_k.$
\end{lem}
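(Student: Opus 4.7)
The plan is to bound the gap $\pred_k - \ared_k$ from above using the descent lemma and to bound $\pred_k$ from below using the previous technical lemma, then balance the two to obtain the desired inequality.

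First I would rewrite
\begin{equation*}
\pred_k - \ared_k = f(x^k + d^k) - f(x^k) - \nabla f(x^k)^T d^k - \tfrac{1}{2}(d^k)^T H_k d^k,
\end{equation*}
noting that the $\varphi$-terms cancel. The descent lemma, which is a standard consequence of Lipschitz continuity of $\nabla f$ with constant $L$, gives $f(x^k+d^k) - f(x^k) - \nabla f(x^k)^T d^k \leq \tfrac{L}{2}\|d^k\|^2$. Combined with $H_k \succeq mI$ this yields
\begin{equation*}
\pred_k - \ared_k \leq \tfrac{L-m}{2}\|d^k\|^2.
\end{equation*}
In the trivial subcase $L \leq m$, the right-hand side is nonpositive, so $\ared_k \geq \pred_k > c_1 \pred_k$ since $c_1 < 1$ and $\pred_k > 0$ (because we assume the algorithm reaches (S.3), and by Lemma \ref{50-lem:tech-estimates}(a), $\pred_k > 0$ whenever $d^k \neq 0$).

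For the remaining case $L > m$, so that $\overline{\mu} = L - m$, I would invoke Lemma \ref{50-lem:tech-estimates}(a) to get $\pred_k \geq \tfrac{1}{2}(m + 2\mu_k)\|d^k\|^2$. Rearranging the target inequality $\ared_k > c_1 \pred_k$ as $\pred_k - \ared_k < (1-c_1)\pred_k$, it suffices to show $(L-m) < (1-c_1)(m + 2\mu_k)$. From $\mu_k \geq L - m$ we have $m + 2\mu_k \geq 2L - m$, so it is enough to verify $L - m < (1-c_1)(2L - m)$, i.e.\ $\tfrac{L-m}{2L-m} < 1 - c_1$.

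The key arithmetic observation, which is the only mildly delicate step, is that $\tfrac{L-m}{2L-m} < \tfrac{1}{2}$ because this is equivalent to $2(L-m) < 2L - m$, i.e.\ to $m > 0$, which holds by assumption. Since $c_1 \in (0,\tfrac{1}{2})$ gives $1 - c_1 > \tfrac{1}{2}$, the strict inequality $\tfrac{L-m}{2L-m} < 1 - c_1$ follows, and the claim is established. The main obstacle is really just keeping the two cases $L \leq m$ and $L > m$ straight and noticing that the restriction $c_1 < \tfrac{1}{2}$ (built into the algorithm) is exactly what makes the comparison with $\tfrac{L-m}{2L-m}$ go through.
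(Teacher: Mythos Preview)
Your proof is correct and uses the same two ingredients as the paper: the descent lemma for Lipschitz $\nabla f$ and the lower bound $\pred_k \geq \tfrac12(m+2\mu_k)\|d^k\|^2$ from Lemma~\ref{50-lem:tech-estimates}(a), followed by the observation that $c_1<\tfrac12$ is exactly what closes the argument.

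The only organisational difference is that the paper avoids your case split. Instead of bounding $\pred_k-\ared_k\le\tfrac{L-m}{2}\|d^k\|^2$ and then distinguishing $L\le m$ from $L>m$, the paper first uses $H_k+\mu_kI\succeq LI$ (which is where the hypothesis $\mu_k\ge L-m$ enters) to obtain the slightly coarser bound $\pred_k-\ared_k\le\tfrac{\mu_k}{2}\|d^k\|^2$. Substituting $\|d^k\|^2\le\tfrac{2\pred_k}{m+2\mu_k}$ then gives $\ared_k\ge\pred_k\cdot\tfrac{m+\mu_k}{m+2\mu_k}>\tfrac12\pred_k\ge c_1\pred_k$ in a single line, valid for all $L,m>0$ simultaneously. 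Your bound $\tfrac{L-m}{2}\|d^k\|^2$ is actually sharper (since $\mu_k\ge L-m$), which is why you get the trivial conclusion $\ared_k\ge\pred_k$ for free when $L\le m$; the paper trades that sharpness for uniformity. Both routes land on the same inequality $\tfrac12>c_1$ as the decisive step.
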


\begin{proof}
	Let $\mu_k\geq \overline{\mu}$. Then $H_k+\mu_kI\succeq LI$, and the Lipschitz continuity of $\nabla f$ yields
	$$
	f(x^k+d^k)-f(x^k)\leq \nabla f(x^k)^Td^k+\frac 12 L\|d^k\|^2\leq \nabla f(x^k)^Td^k+\frac 12 (d^k)^T(H_k+\mu_kI)d^k,
	$$
	which is equivalent to
	$$
	\psi(x^k+d^k)-\psi(x^k)\leq \nabla f(x^k)^Td^k+\varphi(x^k+d^k)-\varphi(x^k)+\frac 12(d^k)^T(H_k+\mu_k I)d^k.
	$$
	Hence, using the definitions of $\pred_k$ and $\ared_k$, we get $-\ared_k\leq -\pred_k+\mu_k/2\ \|d^k\|^2$. A combination with Lemma \ref{50-lem:tech-estimates} (a) yields
	$$
	\ared_k\geq \pred_k-\frac{\mu_k}{2}\|d^k\|^2\geq \pred_k\cdot\frac{\mu_k+m}{2\mu_k+m}> \frac{1}{2}\pred_k\geq c_1\pred_k,
	$$
	which had to be shown (note that we need $c_1\leq \tfrac 12 $ at this point).
\end{proof}

\noindent
For the boundedness of the sequence $\{\mu_k\}$, it remains to prove that \eqref{eq:pred_condition} holds for sufficiently large $\mu_k>0$, which is the aim of the next result.

\begin{prop}\label{50-prop:mu-bounded}
		Assume that $\nabla f$ is Lipschitz continuous with Lipschitz constant $L>0$ and $MI\succeq H_k\succeq mI$ for some $M\geq m>0$. Then, the sequence $\{\mu_k\}$ generated from Algorithm \ref{alg:rpqnm} is bounded.
\end{prop}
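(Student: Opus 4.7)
The plan is to exploit the fact that $\mu_k$ can only be enlarged in an unsuccessful iteration, which (since $H_k+\mu_kI$ is always positive definite under the assumptions, so a solution $d^k$ always exists) occurs in exactly two situations: either the pred-test \eqref{eq:pred_condition} fails in (S.2), or we reach (S.3) with $\rho_k\leq c_1$. I will show that both of these failure modes are ruled out once $\mu_k$ exceeds a computable threshold. Since $\mu_{k+1}\leq \sigma_2\mu_k$, this immediately bounds the whole sequence.

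First I would invoke Lemma~\ref{50-lem:mubig-success}, which asserts that as soon as $\mu_k\geq\overline{\mu}:=\max\{L-m,0\}$ we have $\rho_k>c_1$. Thus no iteration with $\mu_k\geq\overline{\mu}$ can be unsuccessful through the ratio test in (S.3).

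The main work is to exclude the failure of \eqref{eq:pred_condition} for large $\mu_k$. For this I would combine the two estimates of Lemma~\ref{50-lem:tech-estimates}: part~(a) gives the lower bound $\pred_k\geq\tfrac12(m+2\mu_k)\|d^k\|^2$, and part~(b) gives the upper bound $\|r(x^k)\|\leq\bigl(1+\tfrac{1}{m+\mu_k}\bigr)(M+\mu_k)\|d^k\|$. Dividing the former by $\|d^k\|\cdot\|r(x^k)\|$ and using the latter on the right yields
\begin{equation*}
   \frac{\pred_k}{\|d^k\|\cdot\|r(x^k)\|}\;\geq\;\frac{m+2\mu_k}{2\bigl(1+\tfrac{1}{m+\mu_k}\bigr)(M+\mu_k)}.
\end{equation*}
As $\mu_k\to\infty$ the right-hand side tends to $1$. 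Since $p_{\min}<\tfrac12<1$, there is a finite threshold $\widetilde{\mu}>0$ such that this ratio exceeds $p_{\min}$ whenever $\mu_k\geq\widetilde{\mu}$; for such $\mu_k$, \eqref{eq:pred_condition} is violated in the required direction, so (S.2) does not drive us to increase the regularizer.

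Setting $\mu^{\ast}:=\max\{\overline{\mu},\widetilde{\mu}\}$, we conclude that any iteration with $\mu_k\geq\mu^{\ast}$ is successful or highly successful, and hence $\mu_{k+1}\leq\mu_k$. Consequently $\mu_k\leq\max\{\mu_0,\sigma_2\mu^{\ast}\}$ for all $k$, where the factor $\sigma_2$ accounts for the last possible unsuccessful enlargement just before the threshold is crossed. The only mildly delicate step is the limit computation above, but it is a direct two-line calculation; no compactness or subsequence arguments are needed.
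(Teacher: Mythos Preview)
Your proof is correct and uses exactly the same ingredients as the paper's proof: Lemma~\ref{50-lem:mubig-success} to rule out the ratio-test failure, and the combination of Lemma~\ref{50-lem:tech-estimates}(a),(b) to rule out failure of \eqref{eq:pred_condition}, with the same limit computation showing the relevant ratio tends to~$1>p_{\min}$. The only difference is organizational: the paper argues by contradiction (assume a subsequence $\mu_k\to\infty$, then the limit inequality $1\geq 1/p_{\min}$ is absurd), whereas you extract explicit thresholds $\overline{\mu},\widetilde{\mu}$ and conclude directly with the bound $\mu_k\leq\max\{\mu_0,\sigma_2\mu^{\ast}\}$; this buys you a concrete estimate on the regularizer but is mathematically the same argument.
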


\begin{proof}
	Assume that the sequence $\{\mu_k\}$ is unbounded. This means, there is a subsequence $K\subset \mathbb{N}_0$ such that $\{\mu_k\}_K\to\infty$. Since $\mu_k$ cannot increase in successful or highly successful steps, this implies that there are infinitely many unsuccessful steps. Without loss of generality we assume that all steps $k\in K$ are unsuccessful. In view of Lemma \ref{50-lem:mubig-success} this is only possible if for sufficiently large $k\in K$ we have $\pred_k < p_{\min}\|d^k\|\cdot\|r(x^k)\|.$
	Using Lemma \ref{50-lem:tech-estimates} (a), this yields
	$$
	\frac{m+2\mu_k}{2}\|d^k\|< p_{\min}\|r(x^k)\| \qquad\Longleftrightarrow\qquad \frac{\|r(x^k)\|}{\mu_k \|d^k\|}> \frac{m+2\mu_k}{2 p_{\min}\mu_k}.
	$$
	We combine this estimate with Lemma \ref{50-lem:tech-estimates} (b) to get
	$$
	\left(1+\frac{1}{m+\mu_k}\right) \frac{M+\mu_k}{\mu_k}> \frac{m+2\mu_k}{2 p_{\min}\mu_k}
	$$
	for $k\in K$. Taking the limit in $K$, the left hand side of this estimate converges to 1, whereas the right hand side converges to $1/p_{\min}>1$, which yields a contradiction. Hence, the sequence $\{\mu_k\}$ is bounded.
\end{proof}

\noindent
For the convergence of the complete sequence, we need an additional assumption.
In many papers the main assumption to prove local convergence and state a convergence rate is strong convexity. Here, more generally, we assume that $\psi$ satisfies a local error bound condition, which is used by Tseng and Yun in \cite{tseng2009coordinate}.

\begin{ass}\label{50-ass:error-bound}
	Assume that $\psi$ is bounded from below and $\mathcal{X}^*\neq \emptyset$, where $\mathcal{X}^*$ is the set of stationary points of $\psi$.
	\begin{enumerate}
		\item[(a)] For any $\zeta \geq \min_x \psi(x)$, there exist scalars $\tau>0$ and $\varepsilon>0$ such that
		$$
		\dist(x,\mathcal{X}^*)\leq \tau \|r(x)\|\quad \text{whenever}\quad \psi(x)\leq \zeta,\, \|r(x)\|\leq \varepsilon.
		$$
		\item[(b)] There exists a scalar $\delta > 0$ such that
		$$
		\|x-y\|\geq \delta \quad \text{whenever}\quad x\in \mathcal{X}^*, y\in\mathcal{X}^*, \psi(x)\neq \psi(y).
		$$
	\end{enumerate}
\end{ass}

\noindent
Similar assumptions to (a) have been investigated by Luo and Tseng in \cite{luo1992error,luo1993error}. Note that if a function satisfies the  above error bound condition, then it also satisfies the Kurdyka-\L ojasiewicz property \cite{li2018calculus}. Error bounds of this type have been studied by many authors, see e.g.\ \cite{yue2019family,zhou2017unified}.

Some examples of problem classes of the form \eqref{eq:problem} that satisfy Assumption \ref{50-ass:error-bound} (a) are, cf.\ \cite{tseng2009coordinate,yue2019family} and the references therein:
\begin{itemize}
	\itemsep0pt
	\item The function $f$ is strongly convex, $\nabla f$ is Lipschitz continous and $\varphi$ is an arbitrary convex function.
	\item $f(x)=h(Ax)+c^Tx$, where $h:\R^m\to\R$ is a continuously differentiable and strongly convex function such that $\nabla h$ is Lipschitz continuous on every compact set, $A\in\R^{m\times n},c\in\R^n$, and $\varphi$ has a polyhedral epigraph.
	\item $f(x)=h(Ax)$, where $A\in\R^{m\times n}$ and $h$ is given as above, and $\varphi(x)=\sum_{i=1}^s \|x_{G_i}\|_2$, where the sets $G_i\subset \{1,\dots,n\}$ form a partition of $\{1,\dots,n\}$ .
\end{itemize}
Many more functions of type \eqref{eq:problem} fulfill Assumption \ref{50-ass:error-bound} (a) even if they are not covered by the above problem classes. 
For more information and properties of error bound conditions, we refer to \cite{yue2019family,zhou2017unified,tseng2009coordinate}. 

Assumption \ref{50-ass:error-bound} (b) guarantees that the sets of stationary points of $\psi$ with different function values are properly separated. This assumption holds, in particular, if $\psi$ is convex.

It is important to note that we do not assume the convergence of the sequence $\{x^k\}$. Instead, this is a consequence of the above assumptions, as the following result shows. 

\begin{thm}\label{thm:error-bound-conv}
	Let $\{x^k\}$ be a sequence generated by Algorithm~\ref{alg:rpqnm} such that $\nabla f$ is Lipschitz continuous, $MI\succeq H_k\succeq mI$ for some $M\geq m>0$, and let Assumption \ref{50-ass:error-bound} hold. Then the sequence $\{x^k\}$ converges to some $\overline{x}\in\R^n$ and $\sum_{k=0}^\infty \|x^{k+1}-x^k\|<\infty$.
\end{thm}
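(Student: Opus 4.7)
The plan is to leverage Assumption~\ref{50-ass:error-bound} to establish Q\=/linear convergence of the function value gap, from which summability of $\{\|x^{k+1}-x^k\|\}$ follows by direct comparison.

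First, I consolidate the consequences of the standing hypotheses. By Proposition~\ref{50-prop:mu-bounded}, $\mu_{\max}:=\sup_k \mu_k<\infty$, so the spectrum of $H_k+\mu_k I$ is contained in $[m, M+\mu_{\max}]$ and the bounds of Lemma~\ref{50-lem:tech-estimates} hold with uniform constants. Lipschitz continuity of $\nabla f$ implies uniform continuity on $\R^n$, so Theorem~\ref{thm:global2} yields $\|r(x^k)\|\to 0$. The sequence $\{\psi(x^k)\}$ is monotonically decreasing and bounded from below, hence converges to some $\psi^*$; I set $A_k:=\psi(x^k)-\psi^*\geq 0$. Let $\bar{x}^k\in\mathcal{X}^*$ be a point realising $\dist(x^k,\mathcal{X}^*)$. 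The error bound gives $\|\bar{x}^k-x^k\|\leq \tau\|r(x^k)\|\to 0$, and by the separation condition~(b), the continuity of $\psi$, and $\psi(x^k)\to\psi^*$, one can argue that $\bar{x}^k\in\mathcal{X}^{**}:=\mathcal{X}^*\cap\psi^{-1}(\psi^*)$, hence $\psi(\bar{x}^k)=\psi^*$, for all sufficiently large $k$.

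The key step is the quadratic bound $A_{k+1}\leq C\|d^k\|^2$ for large successful iterations $k$. To obtain it, I use the optimality condition for the subproblem \eqref{eq:algorithm-subproblem}, namely $-\nabla f(x^k)-(H_k+\mu_k I)d^k\in\partial\varphi(x^{k+1})$, and evaluate the convexity inequality for $\varphi$ at $x^{k+1}$ with the test point $y=\bar{x}^k$. Combining with the descent lemma $f(x^{k+1})\leq f(x^k)+\nabla f(x^k)^T d^k+\tfrac{L}{2}\|d^k\|^2$ and the two\=/sided Taylor bound $f(x^k)+\nabla f(x^k)^T(\bar{x}^k-x^k)\leq f(\bar{x}^k)+\tfrac{L}{2}\|\bar{x}^k-x^k\|^2$, the terms linear in $\nabla f$ cancel and yield
\begin{equation*}
A_{k+1}\leq \tfrac{L}{2}\|\bar{x}^k-x^k\|^2+\tfrac{L}{2}\|d^k\|^2+(M+\mu_{\max})\|d^k\|\bigl(\|\bar{x}^k-x^k\|+\|d^k\|\bigr).
\end{equation*}
Bounding $\|\bar{x}^k-x^k\|\leq \tau\|r(x^k)\|$ and $\|r(x^k)\|\leq \tfrac{m+1}{m}(M+\mu_{\max})\|d^k\|$ via Lemma~\ref{50-lem:tech-estimates}(b), together with $2ab\leq a^2+b^2$, gives the claimed inequality.

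Finally, Lemma~\ref{50-lem:tech-estimates}(a) and the successful\=/step criterion yield $A_k-A_{k+1}\geq c_1\pred_k\geq \tfrac{c_1 m}{2}\|d^k\|^2$. Combined with the quadratic bound this forces $A_{k+1}\leq \theta A_k$ on successful steps for some $\theta\in(0,1)$, while $A_{k+1}=A_k$ on unsuccessful steps. Hence $A_k$ decays geometrically along the (infinite, by Lemma~\ref{lem:inf_successful}) subsequence of successful steps. Since $\|x^{k+1}-x^k\|=\|d^k\|$ on successful steps with $\|d^k\|^2\leq \tfrac{2}{c_1 m}A_k$, and $\|x^{k+1}-x^k\|=0$ otherwise, $\{\|x^{k+1}-x^k\|\}$ is dominated by a geometric sequence, so $\sum_k\|x^{k+1}-x^k\|<\infty$ and $\{x^k\}$ is Cauchy, hence convergent. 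The main obstacle is the quadratic bound itself: because $f$ is not assumed convex, the standard convex three\=/point identity is unavailable, and the linear $\nabla f$\=/terms can only be eliminated by using the particular subgradient of $\varphi$ at $x^{k+1}$ obtained from the subproblem's Euler--Lagrange condition, which is the reason $\bar{x}^k$ (and not $x^k$) is used as the test point in the convexity inequality.
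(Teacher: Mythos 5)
Your proposal is correct and follows essentially the same route as the paper, which does not write out a proof but defers to Theorem 2 of Tseng and Yun \cite{tseng2009coordinate}: boundedness of $\{\mu_k\}$ via Proposition~\ref{50-prop:mu-bounded}, the error bound to obtain a cost-to-go estimate $\psi(x^{k+1})-\psi^*\leq C\|d^k\|^2$, and the sufficient-decrease inequality $\ared_k\geq c_1\pred_k\geq\tfrac{c_1 m}{2}\|d^k\|^2$ to turn this into Q-linear decay of the value gap and hence summability of the steps. The only point you leave at the level of a gesture is the identification $\psi(\bar x^k)=\psi^*$ for large $k$; to close it, note that $\|\bar x^{k+1}-\bar x^k\|\to 0$ together with Assumption~\ref{50-ass:error-bound}(b) makes $\psi(\bar x^k)$ eventually constant, and the stationarity of $\bar x^k$ (so that $-\nabla f(\bar x^k)\in\partial\varphi(\bar x^k)$) combined with the descent lemma gives $\psi(\bar x^k)-\psi(x^k)\leq\tfrac{3L}{2}\|\bar x^k-x^k\|^2\to 0$, which identifies that constant with $\psi^*$ --- mere continuity of $\psi$ would not suffice here if $\{x^k\}$ is unbounded.
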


This result is a simplified version of Theorem 2 in \cite{tseng2009coordinate} and, therefore, we skip the proof here. However, we briefly discuss the essential adaptations: First, the estimate of Lemma \ref{50-lem:tech-estimates} (c) in combination with Theorem \ref{thm:global2} yields $d^k\to 0$. Moreover, the crucial preliminary of \cite[Theorem 2]{tseng2009coordinate} is the boundedness of the analogous sequence to $\{B_k+\mu_k I\}$, which in our analysis is the result of the assumption on $\{B_k\}$ and Proposition \ref{50-prop:mu-bounded}. The further details of the proof are left to the reader.

\noindent
We note that it is also possible to develop a local convergence theory for Algorithm~\ref{alg:rpqnm} with small adjustments and under appropriate assumptions. In this paper, we focus on limited memory quasi-Newton approximations and therefore focus on the efficient solution of the related subproblems, which is the topic of the next section.

\section{Application to Limited Memory Proximal Quasi-Newton Methods}\label{sec:limited}

This section describes the central part for an efficient implementation of Algorithm~\ref{alg:rpqnm}
using limited memory matrices for $ B_k $. Since the idea itself is central for our work, we
first present the basic steps in a slightly simplified framework in Section~\ref{Sub:Idea}, and 
then come to the details for the actual realization in Section~\ref{Sub:Realization}.

\subsection{Main Idea Based on Compact Representations}\label{Sub:Idea}

The most costly part of Algorithm~\ref{alg:rpqnm} is the computation of $d^k$ in (S.2), which requires the solution of the minimization problem
$$
	\min_d f(x^k)+\nabla f(x^k)^Td+\tfrac 12 d^T(B_k+\mu_k I)d+\varphi(x^k+d).
$$
In the following, we assume that the matrix $B_k+\mu_k I$ is positive definite to ensure that the problem is solvable. If this is not the case, the problem might be unsolvable (depending on the properties of $\varphi$). Nevertheless, the following explanation mainly considers quasi-Newton matrices $B_k$ which fulfil this requirement under mild assumptions. If these are not met, the update is skipped.

So, if $B_k+\mu_kI$ is positive definite, we use the proximity operator to reformulate the problem to
\begin{equation} \label{eq:prox-subproblem}
	d^k = r_{B_k+\mu_k I}(x^k) = \prox_{\varphi}^{B_k+\mu_k I}\big( x^k- (B_k+\mu_k I)^{-1}\nabla f(x^k)\big)-x^k.
\end{equation}
Hence, the main effort is the computation of the proximity operator with respect to the norm induced by $B_k+\mu_k I$, where we are especially interested in the case that $B_k$ is obtained using a limited memory quasi-Newton update. The crucial point for that purpose consists in a suitable combination of a recent result by Becker et al.\ \cite{becker2019quasi} with the compact representation of limited memory quasi-Newton matrices introduced by Byrd et al.\ \cite{byrd1994representations}. 
We first describe the idea of our approach, and then provide the corresponding details for the actual realization (implementation) of the resulting method.

The class of quasi-Newton methods generates a sequence $ \{ x^k \} $ using the recursion 
$ x^{k+1} := x^k - H_k^{-1} \nabla f(x^k) $ for some suitable approximation $ H_k $ of the (not
necessarily existing) Hessian $ \nabla^2 f(x^k) $ (in our setting, we have $ H_k = B_k + \mu_k I $).
The matrices $ H_k $ are usually updated using rank-one or rank-two modifications; two well-known
examples are the SR1 update (symmetric rank-one)
\begin{equation*}
   H_{k+1} := H_{k+1}^{SR1} := H_k + \frac{(y^k - H_k s^k) (y^k - H_k s^k)^T}{(y^k - H_k s^k)^T s^k}
\end{equation*}
and the BFGS update (Broyden-Fletcher-Goldfarb-Shanno)
\begin{equation*}
   H_{k+1} := H_{k+1}^{BFGS} := H_k + \frac{y^k (y^k)^T}{(s^k)^T y^k} - 
   \frac{H_k s^k (s^k)^T H_k}{(s^k)^T H_k s^k} ,
\end{equation*}
where
\begin{equation*}
   s^k := x^{k+1} - x^k, \quad y^k := \nabla f(x^{k+1}) - \nabla f(x^k) \quad \forall k \in
   \mathbb{N}.
\end{equation*}
These quasi-Newton methods are not applicable to large-scale problems since the matrices $ H_k $
are dense. This problem can be avoided based on the following observation: The matrix
$ H_{k+1} $ can, in principle, be re-computed using the data $ H_0 $ together with the vectors
$ s^j $ and $ y^j $ for all $ j = 1, 2, \ldots, k $. Now, if we skip the first of these vectors
and use only the final $ m $ ones (for some small memory $ m \in \mathbb{N} $), we obtain a
limited memory quasi-Newton method, cf. \cite{nocedal1980updating}, which, due to a much smaller storage requirement, can be
applied to large-scale problems. These limited memory versions of standard quasi-Newton updates,
however, may not start with the same initial matrix $ H_0 $, instead they often use an initialization
$ H_{k,0} $ depending on the current iterate $ k $.

Now, consider the proximal subproblem
\begin{equation}\label{eq:Hksubproblem}
   \min_d f(x^k)+\nabla f(x^k)^Td+\tfrac 12 d^T H_k d+\varphi(x^k+d)
\end{equation}
for some suitable matrix $ H_k $. Using $ H_k := \lambda_k I$ $(\lambda_k>0)$, this subproblem is often easy to solve (sometimes
even analytically), whereas we obtain a much better approximation of the given
composite optimization problem if $H_k$ is chosen as a better approximation of the Hessian
$ \nabla^2 f(x^k) $, but then the subproblem itself is more difficult to solve. However, if
\begin{equation}\label{eq:Hksmallrank} 
   H_k = H_{k,0} + U_1 U_1^T - U_2 U_2^T 
\end{equation}
with suitable matrices $ U_i \in \mathbb{R}^{n \times r_i} $  (usually depending on $ k $, but to simplify the notation, we skip this index here) for some small $ r_i \in \mathbb{N} \ (i=1,2)$ and
a simple matrix $ H_{k,0} $ (typically a multiple of the identity matrix such that the corresponding
proximal subproblem is easy to solve), so that $ H_k $ is obtained from $ H_{k,0} $ by a small
rank-modification, then it is shown in Becker et al.\ \cite{becker2019quasi} that the solution of the difficult subproblem
\eqref{eq:Hksubproblem} can be computed from the solution of the (easy) proximal subproblem corresponding
to the matrix $ H_{k,0} $ using only some matrix-vector multiplications and solving a 
(strongly monotone, hence uniquely solvable) nonlinear system of equations of (small) dimension $ r_1+r_2 $.

Recalling the typical updates of quasi-Newton matrices, we immediately see that a single update of, e.g.,
the SR1- and the BFGS-method is precisely of the form required in \eqref{eq:Hksmallrank} with suitable matrices
$ U_1, U_2 $ of rank (at most) one. However, since the additive terms in these quasi-Newton updates
depend on $ H_k $ itself, these formulas cannot be used (directly) to apply the result from 
\cite{becker2019quasi}, which is based on the representation \eqref{eq:Hksmallrank}, to 
limited memories with $ m \geq 2 $. In fact, numerical
results presented in \cite{becker2019quasi} are based on taking a limited memory of $ m = 1 $ only. Their point is that for $m=1$ in the SR1-update, the occuring nonlinear system is of dimension 1 and can, hence be solved by bisection, and, if $\varphi$ is piecewise linear, even exact in log-linear time.

For many medium-sized problems, however, there are advantages to use a memory larger than 1.
This 
is the point where we can use the so-called compact representations of limited memory quasi-Newton matrices.

The Hessian approximations generated by most limited memory quasi-Newton methods can be written using a
compact representation of the form
\begin{equation}\label{eq:subs}
   H_k = H_{k,0} + A_k Q_k^{-1} A_k^T
\end{equation}
for some (usually diagonal) symmetric positive definite matrix $ H_{k,0} \in \mathbb{R}^{n \times n} $,
$ A_k \in \mathbb{R}^{n \times s} $, and a symmetric and nonsingular matrix $ Q_k \in 
\mathbb{R}^{s \times s} $, where, again, $ s \ll n $ is typically a very small number. Such a compact
representation can be used in order to rewrite $ H_k $ in a form required in \eqref{eq:Hksmallrank}.
To this end, we compute a spectral decomposition $ Q_k^{-1} = V_k \Lambda_k V_k^T $ of $ Q_k^{-1} $, i.e.,
$ V_k \in \mathbb{R}^{s \times s} $ is orthogonal and $ \Lambda_k \in \mathbb{R}^{s \times s} $ is
a diagonal matrix with diagonal entries $ \lambda_i^k $ (recall that $ s $ is small, hence the
computation of this spectral decomposition is not at all time-consuming). We then split the
diagonal matrix $ \Lambda_k $ into
\begin{equation*}
   \Lambda_k = \Lambda_k^+ - \Lambda_k^- ,
\end{equation*}
where $ \Lambda_k^+ $ and $ \Lambda_k^- $ are diagonal matrices consisting of the elements
$ \max \{ 0, \lambda_i^k \} $ and $ \max \{ 0, - \lambda_i^k \} $, respectively. Note that this
implies that these two diagonal matrices are positive semidefinite and, therefore, possess a 
matrix square root. Substituting this into 
\eqref{eq:subs} yields the representation \eqref{eq:Hksmallrank}
with the matrices (their dependence on $ k $ is neglected here)
\begin{equation*}
   U_1 := A_k V_k (\Lambda_k^+)^{1/2} \quad \text{and} \quad 
   U_2 := A_k V_k (\Lambda_k^-)^{1/2} .
\end{equation*}
Note that the two matrices $ U_1, U_2 $ actually simplify to some extent since some of their
columns are multiplied with zero entries of the corresponding diagonal matrices.
This completes the general description which allows an efficient implementation of our regularized
proximal limited memory quasi-Newton method.

\subsection{Realization of Proximal Subproblem Solutions}\label{Sub:Realization}

We now present the details of our realization of Algorithm~\ref{alg:rpqnm} where, we recall, we
have $ H_k = B_k + \mu_k I $ in the notation of the previous subsection, and where we use a 
limited memory update of $ B_k $ (not of $ H_k $ itself),
whereas the regularization term essentially only influences the initial matrix $ H_{k,0} $ (or $ B_{k,0} $
in our subsequent notation) since, in any case, this is typically just a multiple of the identity matrix.
Hence, assume we have a compact representation of the form
$$
	B_k = B_{k,0} + A_k Q_k^{-1}A_k^T,
$$
where $B_{k,0}\in\R^{n\times n}$ is a symmetric positive definite matrix, usually chosen as a multiple of the identity, $Q_k\in\R^{s\times s}$ is a symmetric and nonsingular matrix with $s\ll n$, and $A_k\in\R^{n\times s}$, cf.\ \cite{byrd1994representations}. The following example states explicitly the compact representations of the SR1- and the BFGS-updates, since these two will be exploited in our numerical experiments.

\begin{ex} \label{ex:limitedmemoryquasinewton}
As before, let $s^j = x^{j+1}-x^j$ and $y^j=\nabla f(x^{j+1})-\nabla f(x^j)$ for all $j$. Then, in iteration $ k $, we define the matrices 
$$
   S_k:=[s^{k-m}\dots s^{k-1}]\in\mathbb{R}^{n\times m} \quad\text{and}\quad Y_k:=[y^{k-m}\dots y^{k-1}]   
   \in\mathbb{R}^{n\times m}.
$$
Furthermore, let $D_k=D(S_k^TY_k)$ and $L_k=L(S_k^TY_k)$ denote the diagonal part and the strict lower triangle of the matrix $S_k^TY_k$.
	Then, the corresponding limited memory BFGS-update is given by the compact representation
	\begin{gather*}
		B_k := B_k^{BFGS} = B_{k,0} - \left[ \begin{matrix} B_{k,0}S_k & Y_k\end{matrix}\right] \left[ \begin{matrix} S_k^TB_{k,0}S_k&L_k\\L_k^T&-D_k\end{matrix}\right]^{-1} \left[\begin{matrix} S_k^TB_{k,0}\\Y_k^T\end{matrix}\right],\\
		\intertext{hence, }
		A_k = \left[ \begin{matrix} B_{k,0}S_k & Y_k\end{matrix}\right]\in\R^{n\times 2m}\quad\text{and}\quad Q_k = \left[ \begin{matrix} -S_k^TB_{k,0}S_k&-L_k\\-L_k^T&D_k\end{matrix}\right]\in\R^{2m\times 2m}.
	\end{gather*}
	Similarly, the limited memory SR1-update can be written as
	\begin{gather*}
	B_k := B_k^{SR1} = B_{k,0} + (Y_k-B_{k,0}S_k)(D_k+L_k+L_k^T-S_k^TB_{k,0}S_k)^{-1}(Y_k-B_{k,0}S_k)^T,\\
	\intertext{which yields}
	A_k = Y_k-B_{k,0}S_k\in\R^{n\times m}\quad\text{and}\quad Q_k = D_k+L_k+L_k^T-S_k^TB_{k,0}S_k\in\R^{m\times m},
	\end{gather*}
	see \cite[Theorems 2.3 and 5.1]{byrd1994representations}. \hfill$\Diamond$
\end{ex}

\noindent
To simplify the following discussion, we consider a fixed iteration $ k $ and therefore omit this index in the subsequent notation. 

Similar to Section~\ref{Sub:Idea}, with the matrix $ Q = Q_k $ available from the compact representation, we then compute a spectral decomposition $ Q^{-1} = V \Lambda V^T $ of $ Q^{-1} $ with $V\in\R^{s\times s}$ being orthogonal and $\Lambda \in\R^{s\times s}$ being a diagonal matrix. Let $\mathcal{I}_{1},\mathcal{I}_{2}\subset\{1,2,\dots,s\}$ be the sets of indices corresponding to the positive and negative entries of the diagonal of $\Lambda$, respectively. 

Define $\Lambda_{1}$ asthe submatrix of $\Lambda$ with the rows and columns in $\mathcal{I}_{1}$ and $\Lambda_2$ as the submatrix of $-\Lambda$ with the rows and columns in $\mathcal{I}_2$, and let $(AV)_1, (AV)_2$ be the submatrices of $A\cdot V$ with the column indices in $\mathcal{I}_{1}$ and $\mathcal{I}_2$, respectively. Then we can write
$$
	B = B_{0} + U_{1}U_{1}^T - U_{2}U_2^T
$$
with
\begin{equation}\label{eq:definition_U1U2}
	U_{1}:= (AV)_1 \Lambda_1^{1/2} \quad\text{and}\quad U_{2}:= (AV)_2 \Lambda_2^{1/2}. 
\end{equation}
Note that, by defining $\widehat{B_0}=B_0+\mu I$, we obtain a similar formula for the matrix $\widehat{B} =B+\mu I$.
At this point, we can use the following result from \cite[Corollary 3.6]{becker2019quasi} for the solution of \eqref{eq:prox-subproblem}.

\begin{thm} \label{thm:ochs}
	Let $\widehat{B}=\widehat{B}_0 + U_1U_1^T-U_2U_2^T \in\mathbb{S}_{++}^n$ with $\widehat{B}_0\in\mathbb{S}_{++}^n$ and $U_i\in\R^{n\times r_i}$ with rank $r_i$ ($i=1,2$). Set $\widehat{B}_1 = \widehat{B}_0+U_1U_1^T$. Then, the following holds:
	\begin{equation}
		\prox_{\varphi}^{\widehat{B}} (y)= \prox_{\varphi}^{\widehat{B}_0} (y + \widehat{B}_1^{-1} U_2\alpha_2^*-\widehat{B}_0^{-1}U_1\alpha_1^*),\label{eq:prox_ochs}
	\end{equation}
	where $\alpha_i^*\in\mathbb{R}^{r_i}$, $i=1,2$, are the unique zeros of the coupled system
$ \mathcal{L} (\alpha) = \mathcal{L} (\alpha_1, \alpha_2) = 0 $, where $ \mathcal{L} =
\big( \mathcal{L}_1, \mathcal{L}_2 \big) $ is defined by 
	\begin{align}
	\mathcal{L}_1(\alpha_1,\alpha_2) &= U_1^T(y+\widehat{B}_1^{-1}U_2\alpha_2-\prox_\varphi^{\widehat{B}_0}(y+\widehat{B}_1^{-1}U_2\alpha_2-\widehat{B}_0^{-1}U_1\alpha_1))+\alpha_1,\notag\\
	\mathcal{L}_2(\alpha_2,\alpha_2) &= U_2^T(y - \prox_\varphi^{\widehat{B}_0}(y+\widehat{B}_1^{-1}U_2\alpha_2-\widehat{B}_0^{-1}U_1\alpha_1))+\alpha_2.\label{eq:semismooth_system}
	\end{align}
\end{thm}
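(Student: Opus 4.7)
The plan is to characterize $p^* := \prox_\varphi^{\widehat{B}}(y)$ via the generalized Fermat rule of Proposition~\ref{prop:proximity}(b) and to peel off the two low-rank pieces of $\widehat{B} = (\widehat{B}_0 + U_1 U_1^T) - U_2 U_2^T$ one layer at a time, each layer absorbing one piece into an auxiliary vector $\alpha_i \in \R^{r_i}$.

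Concretely, I would first rewrite the optimality condition $0 \in \widehat{B}(p^* - y) + \partial \varphi(p^*)$ as
$$0 \in \widehat{B}_1(p^* - y) - U_2 \alpha_2^* + \partial \varphi(p^*), \qquad \alpha_2^* := U_2^T(p^* - y),$$
and read this, via Proposition~\ref{prop:proximity}(b), as $p^* = \prox_\varphi^{\widehat{B}_1}(y + \widehat{B}_1^{-1} U_2 \alpha_2^*)$. Setting $z^* := y + \widehat{B}_1^{-1} U_2 \alpha_2^*$ and repeating the same trick on $\widehat{B}_1 = \widehat{B}_0 + U_1 U_1^T$ with $\alpha_1^* := U_1^T(p^* - z^*)$ then yields $p^* = \prox_\varphi^{\widehat{B}_0}(z^* - \widehat{B}_0^{-1} U_1 \alpha_1^*)$, which is exactly \eqref{eq:prox_ochs}. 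Rewriting the defining identities $\alpha_2^* = U_2^T(p^* - y)$ and $\alpha_1^* = U_1^T(p^* - y - \widehat{B}_1^{-1} U_2 \alpha_2^*)$ in terms of this proximal formula for $p^*$ produces precisely $\mathcal{L}_1(\alpha^*) = 0$ and $\mathcal{L}_2(\alpha^*) = 0$.

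For uniqueness of $(\alpha_1^*, \alpha_2^*)$, I would run the argument in reverse: given any zero $(\alpha_1, \alpha_2)$ of $\mathcal{L}$ and the associated candidate $p$ from \eqref{eq:prox_ochs}, the prox characterization yields $0 \in \widehat{B}_0(p - y) + U_1 \alpha_1 - \widehat{B}_0 \widehat{B}_1^{-1} U_2 \alpha_2 + \partial \varphi(p)$. Substituting $\alpha_2 = U_2^T(p-y)$ from $\mathcal{L}_2 = 0$ and $U_1 \alpha_1 = U_1 U_1^T(p - y) - U_1 U_1^T \widehat{B}_1^{-1} U_2 \alpha_2$ from $\mathcal{L}_1 = 0$, and then using the telescoping identity $(\widehat{B}_0 + U_1 U_1^T)\widehat{B}_1^{-1} = \widehat{B}_1 \widehat{B}_1^{-1} = I$, this inclusion collapses to $0 \in \widehat{B}(p - y) + \partial \varphi(p)$. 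Since $\widehat{B} \succ 0$, the prox point is unique, so $p = p^*$, and then $\alpha_i = \alpha_i^*$ by the defining formulas.

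The main obstacle is mostly bookkeeping: the two reduction layers are asymmetric because $-U_2 U_2^T$ enters with a minus sign, so the peeling order (first $U_2$, using $\widehat{B}_1$ as base; only then $U_1$, using $\widehat{B}_0$) has to be chosen so that the shift in $\prox_\varphi^{\widehat{B}_0}$ involves $\widehat{B}_1^{-1} U_2 \alpha_2$ rather than $\widehat{B}_0^{-1} U_2 \alpha_2$. The single nontrivial algebraic step in the whole argument is the telescoping $(\widehat{B}_0 + U_1 U_1^T)\widehat{B}_1^{-1} = I$, which is precisely what reconciles the two layers in the reverse direction; the rank assumptions on the $U_i$ enter only as a normalization and play no role in either the existence or the uniqueness argument.
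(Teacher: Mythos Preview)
Your argument is correct. The two-layer peeling via Proposition~\ref{prop:proximity}(b), the identification of $\alpha_2^* = U_2^T(p^*-y)$ and $\alpha_1^* = U_1^T(p^*-z^*)$, and the reverse direction using $(\widehat{B}_0 + U_1 U_1^T)\widehat{B}_1^{-1} = I$ all go through exactly as you describe; the uniqueness step is clean because $\alpha_2$ and then $\alpha_1$ are explicit functions of $p$ once $\mathcal{L}=0$ is unpacked.

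Regarding comparison: the paper does not actually prove Theorem~\ref{thm:ochs}; it quotes it as \cite[Corollary~3.6]{becker2019quasi} and uses it as a black box for the subproblem solver in Algorithm~\ref{alg:subproblem}. So there is no in-paper proof to compare against. Your derivation is essentially the argument one finds in \cite{becker2019quasi}, where the rank-one (or block) modification formula for the prox is obtained by exactly this kind of optimality-condition manipulation and then iterated. Your observation that the full-rank hypothesis on the $U_i$ is not needed for the existence/uniqueness argument is also correct; in \cite{becker2019quasi} that hypothesis mainly serves to fix the dimension $r_i$ of the auxiliary variable and avoid redundant parametrizations.
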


\noindent
In the following, we restrict the analysis to the case $B_{0}= \gamma I$ for some $\gamma>0$. Hence, in Theorem \ref{thm:ochs} we have $\widehat{B}_0 = \hat{\gamma}I$ with $\hat{\gamma}=\gamma+\mu$, which can be easily inverted and the proximity operator $\prox_\varphi^{\widehat{B}_0}$ can often be computed analytically. 
For the computation of $\widehat{B}_1^{-1}$ and $\widehat{B}^{-1}$, we use the Sherman-Morrison-Woodbury formula
to obtain
\begin{align*}
\widehat{B}_1^{-1} &= \hat{\gamma}^{-1} I - \hat{\gamma}^{-2}U_1(I+\hat{\gamma}^{-1}U_1^TU_1)^{-1}U_1
\qquad \text{and}\\
\widehat{B}^{-1} &= \widehat{B}_1^{-1} + \widehat{B}_1^{-1}U_2(I-U_2^T\widehat{B}_1^{-1}U_2)^{-1}U_2^T\widehat{B}_1^{-1}.
\end{align*}
Since the proximity operator is Lipschitz continuous, nonsmooth (semismooth) Newton methods are suitable candidates for the  numerical computation of the unique zero $\alpha^*=(\alpha_1^*,\alpha_2^*)$ of the nonlinear system of equations
$\mathcal{L}(\alpha) = 0 $ in Theorem~\ref{thm:ochs}. An iteration of the semismooth Newton method is given by 
\begin{equation}
	\alpha^{j+1} = \alpha^j - G_{j}^{-1}\mathcal{L}(\alpha^j),\label{eq:semismooth_update}
\end{equation}
where $G_j=G(\alpha^j)$ is a Newton derivative of $\mathcal{L}$ in $\alpha^j$, cf. \cite{qi1993nonsmooth}. For some details on Newton differentiable functions, we refer to \cite{griesse2008semismooth}. Provided that the Newton derivative of the proximity operator can be computed, a short calculation and the chain rule for generalized derivatives \cite[Theorem 3.5]{griesse2008semismooth} show the following result.

\begin{prop}
	Let $\prox_\varphi^{\widehat{B}_0}$ be Newton-differentiable with generalized derivative $P$. Then $\mathcal{L}$ is also Newton-differentiable, and the generalized derivative is given by
	\begin{equation}
		G(\alpha) = \left[\begin{matrix} U_1 & U_2\end{matrix}\right]^T P(z) \left[\begin{matrix} \widehat{B}_0^{-1}U_1 & -\widehat{B}_1^{-1}U_2\end{matrix}\right] + \left[\begin{matrix} I & U_1^T \widehat{B}_1^{-1}U_2\\0&I\end{matrix}\right],\label{eq:semismooth_derivative}
	\end{equation}
	where $z=y+\widehat{B}_1^{-1}U_2\alpha_2-\widehat{B}_0^{-1}U_1\alpha_1$.
\end{prop}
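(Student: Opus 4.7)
The plan is to view $\mathcal{L}$ as the sum of a linear (hence trivially Newton differentiable) part and a composition whose outer map is the Newton differentiable proximity operator $\prox_\varphi^{\widehat{B}_0}$ and whose inner map is the affine function $\alpha \mapsto z(\alpha) := y + \widehat{B}_1^{-1}U_2\alpha_2 - \widehat{B}_0^{-1}U_1\alpha_1$. Since affine maps are (classically) differentiable with constant Jacobian $J := \bigl[-\widehat{B}_0^{-1}U_1, \ \widehat{B}_1^{-1}U_2\bigr]$, the chain rule \cite[Theorem~3.5]{griesse2008semismooth} ensures that $\alpha \mapsto \prox_\varphi^{\widehat{B}_0}(z(\alpha))$ is Newton differentiable with generalized derivative $P(z(\alpha)) \cdot J$. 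Then, by closure of Newton differentiability under sums, linear combinations, and left multiplication by constant matrices, the full map $\mathcal{L}$ is Newton differentiable.

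For the concrete formula, I would simply collect the four blocks. The constant/linear contributions to $\mathcal{L}_1$ yield $I$ in the $\alpha_1$-column and $U_1^T\widehat{B}_1^{-1}U_2$ in the $\alpha_2$-column; for $\mathcal{L}_2$ they yield $0$ and $I$, respectively. That gives precisely the additive second matrix on the right-hand side of \eqref{eq:semismooth_derivative}. The nonsmooth contribution, coming from $-U_i^T \prox_\varphi^{\widehat{B}_0}(z(\alpha))$ ($i=1,2$), produces by the chain rule the term $-\bigl[U_1, U_2\bigr]^T P(z) J$, and since $J = \bigl[-\widehat{B}_0^{-1}U_1, \ \widehat{B}_1^{-1}U_2\bigr]$, the minus sign in front of the proximity operator combines with the minus sign in the first column of $J$ to produce the first matrix $\bigl[U_1, U_2\bigr]^T P(z)\bigl[\widehat{B}_0^{-1}U_1, \ -\widehat{B}_1^{-1}U_2\bigr]$ in \eqref{eq:semismooth_derivative}.

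The main obstacle is a careful bookkeeping of signs together with a precise application of the chain rule for Newton derivatives, which requires that the inner map (here the affine function $z(\cdot)$) be classically (Fr\'echet) differentiable so that the composition preserves Newton differentiability with the expected product form. Once this is in place, the derivation reduces to a block-matrix computation, and no further subtlety arises.
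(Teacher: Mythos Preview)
Your proposal is correct and follows precisely the approach indicated in the paper, which merely states that the result follows from ``a short calculation and the chain rule for generalized derivatives \cite[Theorem~3.5]{griesse2008semismooth}.'' Your decomposition into the affine part and the composition with the proximity operator, together with the block-wise sign bookkeeping, is exactly that short calculation made explicit.
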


\noindent
In many applications the generalized derivative of the proximity operator can be computed analytically.

\begin{ex} \label{ex:proxgradients}
	(a) Let $\varphi(x):=\lambda\|x\|_1$ and $\widehat{B}_0=\hat{\gamma} I$ for some $\lambda,\hat{\gamma}>0$. Then the proximity operator is given (component-wise) by
	$$
	\big(\prox_{\varphi}^{\hat{\gamma} I}\big)_i (x) = 
	\begin{cases} x_i-\lambda\hat{\gamma},&\text{if } x_i\geq\lambda\hat{\gamma},\\
	0,&\text{if }  |x_i|<\lambda\hat{\gamma},\\
	x_i+\lambda\hat{\gamma},&\text{if } x_i\leq - \lambda\hat{\gamma}, \end{cases}
	$$
	cf. \cite[Example 3.2.8]{milzarek2016numerical}. Hence, the diagonal matrix $P(x)$ with diagonal 
	entries
	$$
		P_{ii}(x) = \begin{cases} 1, &\text{if }  |x_i|\geq \lambda\hat{\gamma},\\0,&\text{otherwise}\end{cases}
	$$
	is an element of the generalized Jacobian in the sense of Clarke, cf. \cite{clarke1975generalized}, and, therefore, a Newton derivative.
	
	(b) Let $\varphi(x):=\lambda\|x\|_2$. Then, an elementary calculation shows
	$$
	\prox_{\varphi}^{\hat{\gamma} I} (x) = x\cdot \max\Big\{1-\frac{\lambda\hat{\gamma}}{\|x\|_2},0\Big\} 
	$$
	cf.\ \cite[Example 3.2.8]{milzarek2016numerical}. A short computation therefore shows that the following 	is a Newton derivative of this proximity operator:
	$$
	P(x) = \begin{cases} \big(1-\frac{\lambda\hat{\gamma}}{\|x\|_2}\big) I + \frac{\lambda\hat{\gamma}}{\|x\|_2^3}xx^T,&\text{if } \|x\|_2\geq\lambda\hat{\gamma},\\
	0,& \text{otherwise.}\end{cases} 
	$$
	The two examples given here will be used in our numerical section.\hfill$\Diamond$
\end{ex}

\noindent
We summarize the previous discussion and present our method for the computation of \eqref{eq:prox-subproblem} in the following algorithm.

\begin{samepage}
\begin{alg}[Solution of the subproblem \eqref{eq:prox-subproblem}] \label{alg:subproblem}
\leavevmode \vspace{-1.3\baselineskip}
\begin{itemize}
	\item[(S.0)] Given an iterate $x^k$, a compact representation $B_k=\gamma_k I + A_k Q_k^{-1} A_k^T$ of the corresponding Hessian approximation, and $\mu_k>0$.
		\item[(S.1)] Compute the spectral decomposition $Q_k^{-1} = V_k \Lambda_kV_k^T$, define 
		$$
			\mathcal{I}_1 :=\big\{ i\in\{1,\dots,s\}\mid \Lambda_k(i,i) >0\big\},\qquad \mathcal{I}_2 :=\big\{ i\in\{1,\dots,s\}\mid \Lambda_k(i,i) <0\big\},
		$$
		and determine $U_{1},U_2$ according to \eqref{eq:definition_U1U2}.
		
		\item[(S.2)] Choose $\alpha^0\in\R^{r_1+r_2}$ and compute the zero $\alpha^*$ of $\mathcal{L}=(\mathcal{L}_1,\mathcal{L}_2)$ defined in \eqref{eq:semismooth_system}, using a semismooth Newton method with the updates given in \eqref{eq:semismooth_update} and the generalized Jacobian given in \eqref{eq:semismooth_derivative}, until a suitable termination criterion holds.
		
		\item[(S.3)] Compute $d^k = \prox_{\varphi}^{B_k+\mu_k I}\big( x^k- (B_k+\mu_k I)^{-1}\nabla f(x^k)\big)-x^k$ using \eqref{eq:prox_ochs}.
	\end{itemize}
	
\end{alg}
\end{samepage}

\noindent
Of course, the most expensive part of Algorithm \ref{alg:subproblem} is the solution of the semismooth Newton equation in (S.2). While Becker et al. \cite{becker2019quasi} suggest a solution using an inexact semismooth Newton method in the general case, our experiments show that using the above described method performs just a few (in most cases 1-2) iterations to end up with an approximation of $\alpha^*$ satisfying $\|\mathcal{L}(\alpha^*)\|<10^{-10}$ independently of the size of the memory. This underlines the high efficiency of Algorithm \ref{alg:subproblem}, in particular using memories larger than one.

\section{Numerical Results}\label{sec:numeric}

In this section, we report numerical results for solving problem \eqref{eq:problem} using the Regularized Proximal Quasi-Newton Method (RPQN) from Algorithm \ref{alg:rpqnm} with limited memory quasi-Newton matrices. After comparing different limited memory methods for the computation of the occuring proximity operators, we compare this method with several methods applicable to solve problem \ref{eq:problem}.

The numerical results have been obtained in MATLAB R2020b using a machine running Open SuSE Leap 15.2 with
an Intel Core i5 processor 3.2 GHz and 16 GB RAM.

\subsection{Least Squares Problems with Group Sparse Regularizer} \label{sec:numexample1}

In our first example, we consider the least squares problem for $A\in\R^{m\times n}$ and $b\in\R^m$ with an $\ell_1$-$\ell_2$-sparsity regularizer, which is also called a group sparse regularizer in the literature. The problem is given by
$$
	\min_x \frac 12 \big\| Ax-b\|_2^2 + \lambda \|x\|_{2,1},
$$
where 
$$\|x\|_{2,1} := \sum_{j=1}^p \|x_{\mathcal{I}_j}\|_2.$$ 
Here, the index sets $\mathcal{I}_j$ $(j=1,\dots,p)$ form a partition of $\{1,\dots,n\}$. Since the groups $\mathcal{I}_j$ are pairwise disjoint, the proximity operator $\prox_{\lambda\|\cdot\|_{2,1}}$ and a Newton derivative thereof can be computed block-wise using the formulas in Example \ref{ex:proxgradients}. The use of the $\ell_1$-$\ell_2$-regularizer makes sense in many applications, where sparsity should be achieved with respect to some groups of variables. We refer to \cite{meier2008group} for more information about group (sparse) regularizers.

Note that the gradient $\nabla f(x)=A^T(Ax-b)$ of the function $f(x)=\tfrac 12 \|Ax-b\|_2^2$ is obviously Lipschitz continuous. Hence, the assumptions of Theorem \ref{thm:global2} are satisfied. Furthermore, by discussion in Section \ref{sec:error-bound-conv}, this problem setting also satisfies Assumption \ref{50-ass:error-bound} which, due to the convexity of the problem setting, implies the convergence of the complete sequence to a global minimizer.

\subsubsection{Problem Setting and Implementation}

We follow the generic example in \cite{becker2019quasi} and choose the entries in $A$ and $b$ from a uniform distribution in $[0,1]$ with $n=25k$ and $m=16k$ for various $k\in\mathbb{N}$. The parameter $\lambda$ is set to $1$. Furthermore, the index sets $\mathcal{I}_j$ are chosen randomly with 4 to 12 elements. The initial guess for the iterate is $x^0=0$.
In Algorithm~\ref{alg:rpqnm}, we choose the parameters $\mu_0 = 1$, $p_{\min}=c_1=10^{-4}$, $c_2=0.9$, $\sigma_1=0.5$ and $\sigma_2=4$. 

Furthermore, our tests showed that the semismooth Newton method for the computation of the proximity operators in Algorithm~\ref{alg:subproblem} converges very fast (mostly within 1 or 2 steps), so we stop if $\|\mathcal{L}(\alpha)\|<10^{-10}$ and use a maximal iteration number of 10. 
Since the limited memory BFGS-updates are only well-defined if $(s^k)^Ty^k>0$, it is common to skip the update of the limited memory matrices if $(s^k)^Ty^k<\varepsilon \|s^k\|^2$.  For the SR1-update ill-conditioned steps are skipped easily in a similar way as described in \cite{byrd1994representations}: Instead of computing the spectral decomposition of $Q_k^{-1}$ in Algorithm \ref{alg:subproblem}, we compute the spectral decomposition $V_k\Lambda_kV_k^T$of $Q_k$ and define the index sets $\mathcal{I}_1$ and $\mathcal{I}_2$ to contain the indices such that $\Lambda_k(i,i)>\varepsilon$ and $\Lambda_k(i,i)<-\varepsilon$, respectively. With this strategy, rows and columns with ill-conditioned steps ($|\Lambda_k(i,i)|\leq \varepsilon$) are skipped.
We choose $\varepsilon = 10^{-8}$ in our experiments and note that updates are almost never skipped.
The initial estimate $\gamma_k$ for the computation of the limited memory quasi Newton matrices is set to 
$$
   \gamma_k = \frac{(y^k)^Ty^k}{(s^k)^Ty^k},
$$
following the approach of Liu and Nocedal \cite{liu1989limited}. 
There are several ways to update the matrix $B_k$ if a step was unsuccessful. In this case one could start again with memory 0. However, our experiments show better results if the update of $B_k$ is simply skipped.

To compare different methods, we initially run the algorithm once with a very high accuracy to determine a good approximation to the optimal function value $\psi^*$, and then terminate the methods if the current iterate $x^k$ satisfies
\begin{equation}\label{eq:numeric_error}
	\frac{\psi(x^k)-\psi(x^*)}{\max(1,|\psi(x^*)|)}\leq 10^{-6},
\end{equation}
where the term on the left hand side is referred to as \emph{objective value error}.
Besides analysing the regularized proximal quasi-Newton method (RPQN) itself, we compare it to the following methods:
\begin{itemize}
	\item QGPN (Globalized Proximal Quasi-Newton Method \cite{kanzow2021globalized})
	
	This method represents a class of several proximal quasi-Newton methods, which use an Armijo-type line search strategy to guarantee convergence. In contrast to other methods, e.g. \cite{becker2019quasi,lee2014proximal}, a further globalization using a proximal gradient method is applied, which has shown to improve the performance. Parameters are chosen as in \cite{kanzow2021globalized}. 
\end{itemize}
In addition to this second order proximal method, we use two well knows proximal first order methods to compare the results to. Although there are plenty of accelerated proximal first order methods, to the author's knowledge there is no clear favourite regarding the performance. Hence, we chose the following well-known ones. 

\begin{itemize}
	\item FISTA (Fast Iterative Shrinkage Thresholding Algorithm \cite{beck2009fast})
	
	FISTA is one of the most common accelerated first order methods for solving convex problems with composite functions. In every step a subproblem of the form \eqref{eq:prox-subproblem} is solved, where $B_k+\mu_kI$ is replaced by $L_kI$ and $L_k$ is an approximation to the Lipschitz constant of $\nabla f$. We start with the initial guess $L_0=1$ and increase with $\eta=2$, if the step is not successful.\\
	Although there are several adaptations of FISTA in the nonconvex setting, e.g.\ \cite{ochs2019adaptive}, we restrict the analysis to the convex version.
	
	\item SpaRSA (Sparse Reconstruction by Separable Approximation \cite{wright2009sparse})

	SpaRSA is another first order method for the considered problem class. The main difference to FISTA is the update of the factor $L_k$, which is done by a Barzilai-Borwein approach. Hence, the method is related to RPQN with a memory of $0$. Furthermore, the theory of SpaRSA also includes nonconvex functions.
\end{itemize}

\noindent
All that techniques are proximal-type methods, since these are highly efficient for solving optimization problems with composite functions. In the above setting, we also tested a method based on the forward backward envelope \cite{themelis2018forward}. Furthermore, the setting in the subsequent section allows using an interior point method, cf.\ \cite{kim2007interior}. However, these methods did not yield benefits in comparison to the above mentioned methods. Instead, we also provide comparisons with the following non-proximal method.

\begin{itemize}
	\item SNF (Semismooth Newton Method with Multidim. Filter Globalization \cite{milzarek2014,milzarek2016numerical})
	
	This method by Milzarek and Ulbrich is based on the semismooth Newton method to find a zero of $r(x)$, combined with a globalization using a filter strategy. There is a convex and nonconvex version of the filter conditions to decide whether the computed update is applied or a proximal gradient step is performed instead.
\end{itemize}

\subsubsection{Discussion of the Results}

\begin{figure}	
	\begin{minipage}[t]{0.46\textwidth}
		\vspace*{0pt}
		\includegraphics[width=\textwidth]{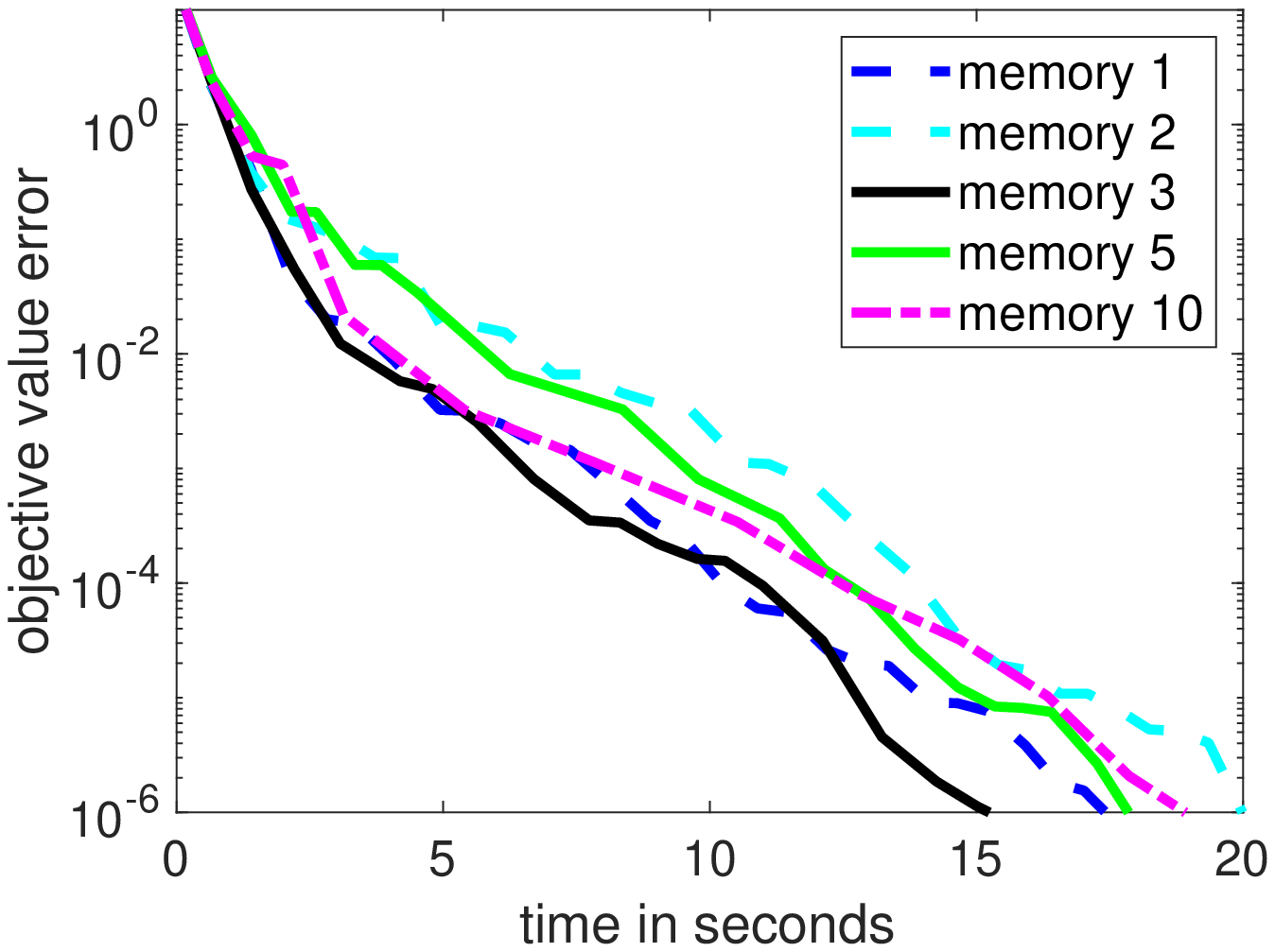}
	\end{minipage}
	\hfill
	\begin{minipage}[t]{0.46\textwidth}
	\caption[caption]{\small{\textsf{Convergence plot for RPQN with limited memory BFGS approach and different memories for the setting in Section \ref{sec:numexample1} for solving the least squares problem with group sparse regularizer. The run time is the average of 10 runs.}}\normalsize}
	\label{fig:group_lasso}
	\end{minipage}
\end{figure}	

We start comparing the size of the memory using the dimension $k=100$, i.e.\ $n=2500$ and $m=1600$, which should be chosen for the limited memory quasi-Newton method. Figure~\ref{fig:group_lasso} shows the relation between the elapsed run time and the current error as defined in \eqref{eq:numeric_error}, when RPQN is applied to the test problem with limited memory BFGS-updates. To avoid side effects and first-time computation costs, the time is averaged over 10 runs.
The choice of the memory size should be big enough to achieve good performance, but preferably small to save computation costs. Figure~\ref{fig:group_lasso} indicates that the impact of the memory size to the run time is relatively small, but the memory 3 showed the best performance. This is also validated by the data given in Table \ref{tab:num_vergleich}. In a similar test with limited memory SR1-updates, the best results were achieved with a memory of 5. RPQN with limited memory BFGS- and SR1-updates and the determined optimal memory sizes are denoted by RPQN (L-BFGS) and RPQN (L-SR1), respectively. 

\begin{table}[b]
	\begin{center}
		\small	
		\begin{tabular}{rcccccccc}
			\hline
			method &iter& highly&succ.& unsucc.&sub-& function & proximity & matrix-vector\\ 
			(memory)& &s. iter&iter &iter &iter&eval &eval &products\\ \hline
			L-BFGS (1) &46&18&14&14&199&47&442&94\\ 
			L-BFGS (2) & 36&18&5&13&149&36&333&73\\ 
			L-BFGS (3) &49&27&6&16&208&50&461&100\\ 
			L-BFGS (5)&55&32&3&20&265&53&577&106\\ 
			L-BFGS (10)&34&20&2&12&121&33&276&66\\ \hline
		\end{tabular}
		
	\end{center}
	\vspace*{-0.5cm}
	
	\caption{\small \textsf{Values of the test example in Section~\ref{sec:numexample1} for the RPQN method with limited memory BFGS update and various memories.}
	}
	\label{tab:num_vergleich}
\end{table}

For a comparison to other state-of-the-art methods, we take $k\in\{1,3,10,30,100,300\}$ and run all algorithms on 10 random examples as described above. The average computation time in relation to the problem dimension is visualized in Figure \ref{60-fig:quadratic-group-sizes}. For the comparison we used RPQN and QGPN with limited memory BFGS-updates and a memory of 10. Note that QGPN did not converge within $10^4$ (outer) iterations for $n=7500$. One sees that the performance of the first-order methods is better for small problem sizes. This follows from the high computation costs for solving the subproblems, which does not yield a profit for small dimensions. On the other hand, starting with $n=750$, RPQN clearly outperforms the other methods, not only first-order, but also the tested second-order methods. This shows that the regularization in Algorithm \ref{alg:rpqnm} is superior although some iterations are unsuccessful and the computed solutions of the corresponding subproblems are discarded.

\begin{figure}[htb]
	\begin{center}
		\begin{minipage}{0.8\textwidth}
			\includegraphics[width=\textwidth]{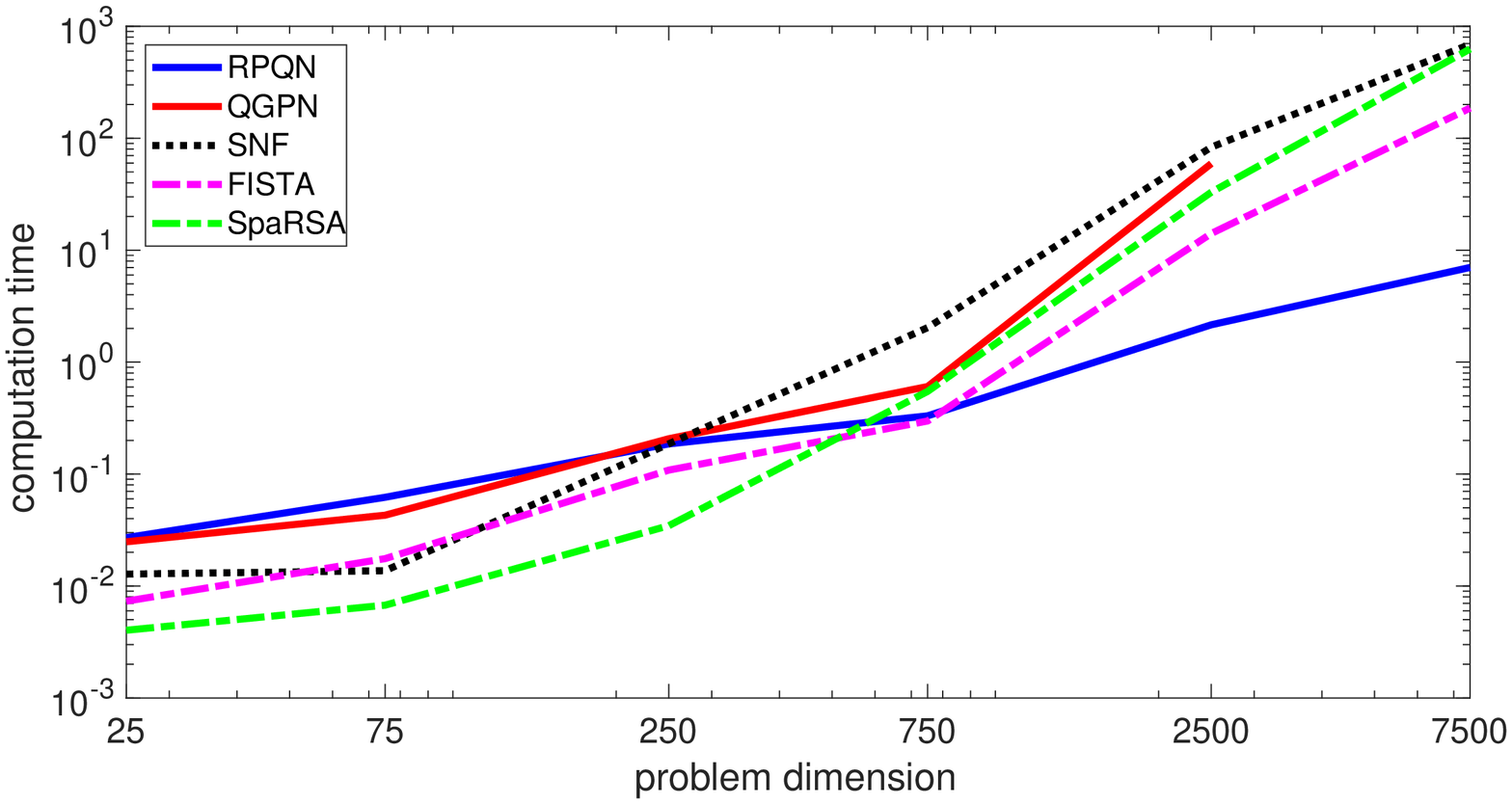}
			\vspace*{-0.8cm}
		\end{minipage}
	\end{center}
	\caption[caption]{\small \textsf{Comparison of the performance of several methods depending on the problem dimension as described in Section \ref{sec:numexample1}.}}
	\label{60-fig:quadratic-group-sizes}
\end{figure}

\subsection{\texorpdfstring{$\mathbf{\ell}_1$}{l1}-regularized Least Squares Problem (LASSO)} \label{sec:numexample2}

We demonstrate the performance of our method for the unconstrained LASSO (least absolute shrinkage and selection operator) problem
$$
	\min_x \frac 12\|Ax-b\|_2^2+\lambda\|x\|_1,
$$
with $A\in\R^{m\times n}, b\in\R^m$ and $\lambda > 0$. This formulation is used for many problems to handle sparsity when finding a solution of $Ax\approx b$, see e.g. \cite{figueiredo2007gradient,beck2009fast}. Again, we use a test setting from \cite{becker2019quasi} with $n=3000$ and $m=1500$, which is typical for compressed sensing. The entries of $A$ and $b$ are independently and identically distributed according to the standard normal distribution, the penalty parameter is chosen as $\lambda = 0.1$. We use the methods described in
Section~\ref{sec:numexample1} and almost all parameters are used as before, except that the memory for RPQN (L-BFGS) is set to $m=10$, and QGPN is applied with a limited memory BFGS-update and a memory of $5$, as these proved to be the best choices in our tests.

The results are illustrated in Figure~\ref{fig:num_vergleich23} (a). Again one sees that there is almost no difference between the optimal versions (concerning the size of the memory) of the limited memory BFGS- and SR1-updates of RPQN. Furthermore, these methods perform significantly better than the other tested methods. While QGPN can keep up until an objective value error of approximately $10^{-1}$, its performance gets very slow afterwards. The first order methods FISTA and SpaRSA have by far longer running times to achieve appropriate errors.

\begin{figure}[htb]	
	
	\begin{minipage}{0.46\textwidth}
		\includegraphics[width=\textwidth]{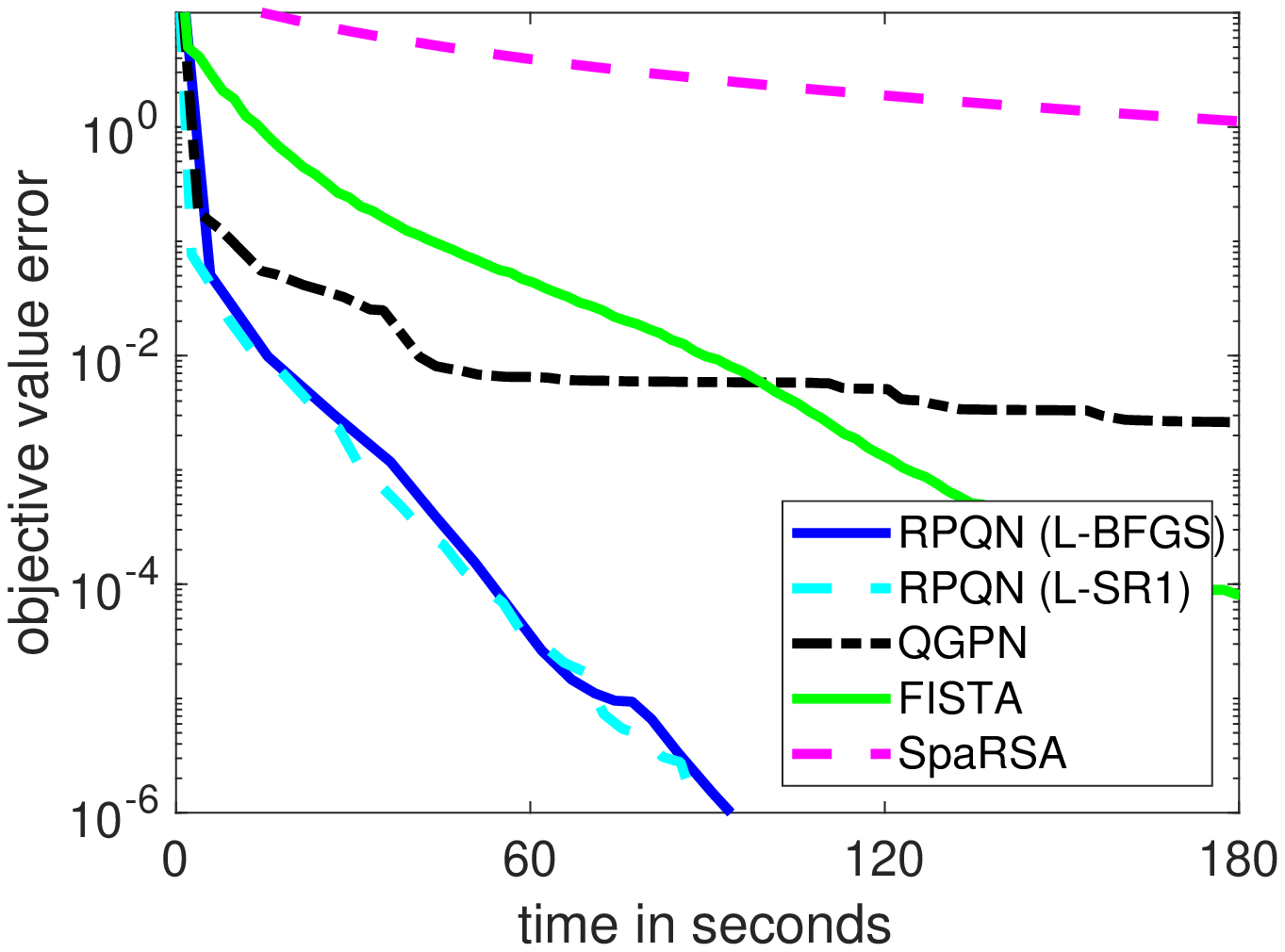}
		\vspace*{-0.8cm}
		
		\begin{center} \small \textsf{(a) Comparison of different methods for the example in Section~\ref{sec:numexample2}.}\end{center}
	\end{minipage}
	\hfill
	\begin{minipage}{0.46\textwidth}
		\includegraphics[width=\textwidth]{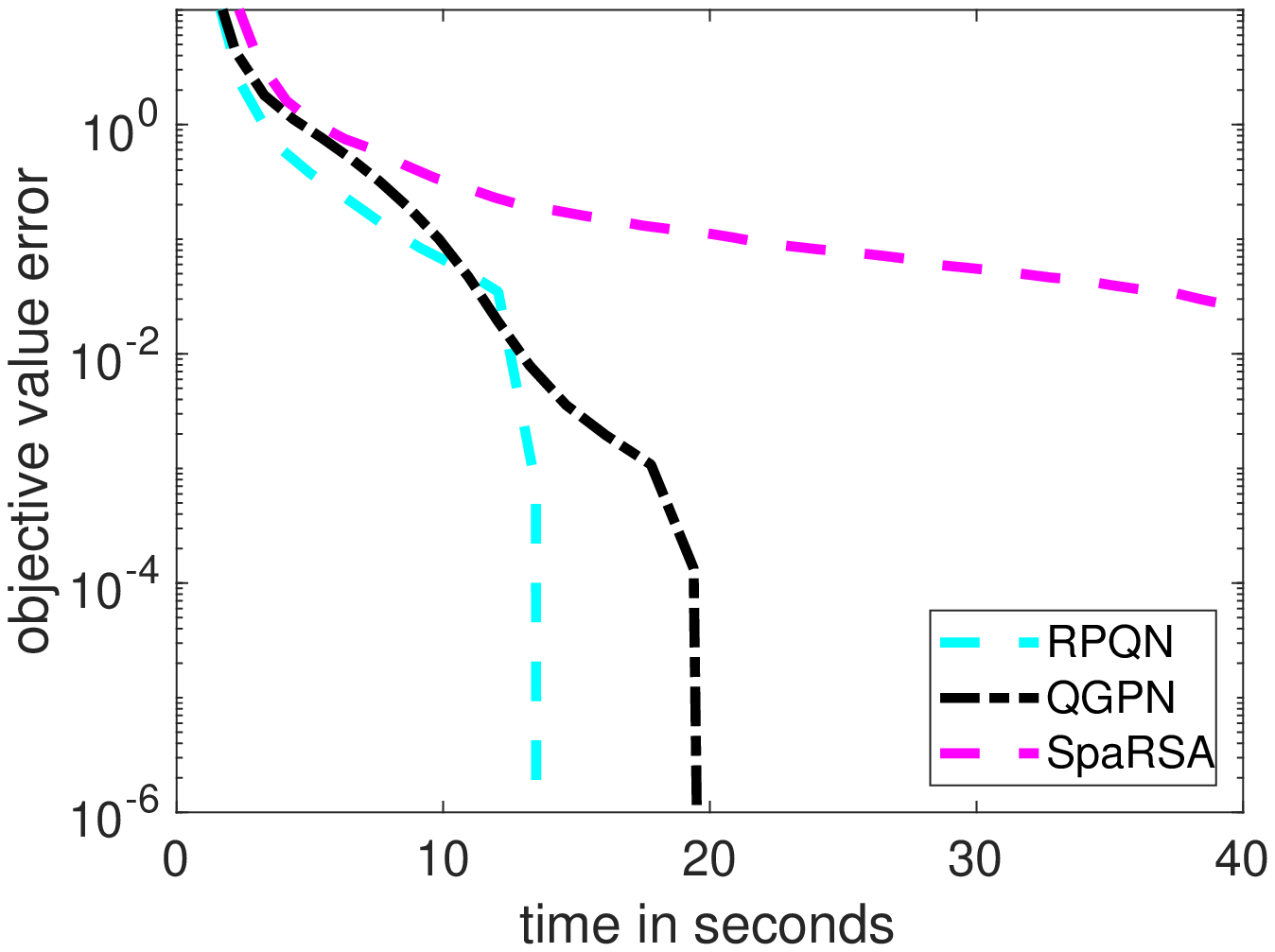}
		\vspace*{-0.8cm}
		
		\begin{center} \small \textsf{(b) Comparison of different methods for the example in Section~\ref{sec:numexample3}.}\end{center}
	\end{minipage}
	
	\caption[caption]{\small \textsf{Convergence plots for the $\ell_1$-regularized least squares problem (a) and the nonconvex image restoration (b). The run time is the average of 10 runs, the term "objective value error" refers again to the stopping criterion defined in \eqref{eq:numeric_error}.}}
	\label{fig:num_vergleich23}
\end{figure}

\subsection{Nonconvex Image Restoration} \label{sec:numexample3}

In this section, we consider a nonconvex image restoration problem. Given a noisy blurred image $b\in\R^n$ and a blur operator $A\in\R^{n\times n}$, the aim is to restore the original image $x\in\R^n$ such that $Ax\approx b$. If there are Gaussian errors on the image $b$, this problem can be solved efficiently using a quadratic loss similar to the previous sections. If the errors are distributed by Student's $t$-distribution, cf. \cite{aravkin2012robust2}, this approach usually does not perform well. For that purpose, the quadratic loss can be replaced by
$$
	f(x) := \sum_{i=1}^n \log\big((Ax-b)_i^2+1\big),
$$
cf. \cite{stella2017forward}. To guarantee antialiasing, we add the nonsmooth term $\varphi(x):=\lambda\|Bx\|_1$, where $B\in\R^{n\times n}$ is a two dimensional Haar wavelet transform and $\lambda>0$. Since $B$ is orthogonal, we can reformulate the problem $\min_x f(x)+\varphi(x)$ into
$$
	\min_y \sum_{i=1}^n \log\big((AB^Ty-b)_i^2+1\big)+\lambda\|y\|_1,
$$
where $y:=Bx$. The function $f$  is not convex, but  $\nabla f$ is Lipschitz continuous. Furthermore, we expect a solution to this problem to approximately fulfill $AB^Ty^*=b$, so $f$ is strongly convex in a neighbourhood of the solution if $A$ as full range. This means that our convergence theory applies here and we again get the convergence of the complete sequence of iterates to a stationary point.

\begin{table}[b]
	\begin{center}
		\small	
		\begin{tabular}{rccccccc}
			\hline
			method &iter& Newton- &succ.& sub-& function & proximity & matrix-vector\\ 
			& &iter&iter &iter &eval &eval &products\\ \hline
			RPQN&890&-&866&1790&891&4448&1790\\
			QGPN&1101&1098&-&1175&1113&2354&2215\\ 
			SNF&183&91&-&1189&784&408&3855\\
			SpaRSA&1089&-&-&1964&1965&1964&3930\\ \hline
		\end{tabular}
		
	\end{center}
	\vspace*{-0.5cm}
	
	\caption{\small \textsf{Numerical data for the image restoration example in Section~\ref{sec:numexample3}.
		}
	}
	\label{60-tab:image-data}
\end{table}

We follow the test setting in \cite{bot2016inertial_forward}, see also \cite{stella2017forward,kanzow2021globalized}, to restore a $256\times 256$ test image, hence $n=256^2 = 65 536$. The mapping $A$ is a Gaussian blur operator of size $9\times 9$ and with standard deviation $4$ and $B$ is the two dimensional discrete Haar wavelet of level $4$. Furthermore, we choose $\lambda = 10^{-4}$.
The noisy blurred image $b$ is created from the original cameraman image by applying $A$ and adding Student's $t$-noise with degree of freedom 1 and rescaled by $10^{-3}$, and we start with $y^0=b$.

For our analysis, we solve the image restoration with RPQN and QGPN with limited memory SR1-updates and a memory of 2 (which, again, behaved best in our tests), SNF and SpaRSA. Details on the methods are given in Section~\ref{sec:numexample1}. Note that we do not apply FISTA to this problem since this solver is designed for convex problems.

As before, using the same rules, we sometimes skip the limited memory updates. However, even though the problem is nonconvex and one can therefore expect that this case occurs more frequently, our experiments reveal that there is a maximum of one or two skipped updates per run of RPQN. 

\begin{figure}[tb]	
	\begin{center}
	\begin{minipage}[t]{0.27\textwidth}
		\includegraphics[width=\textwidth]{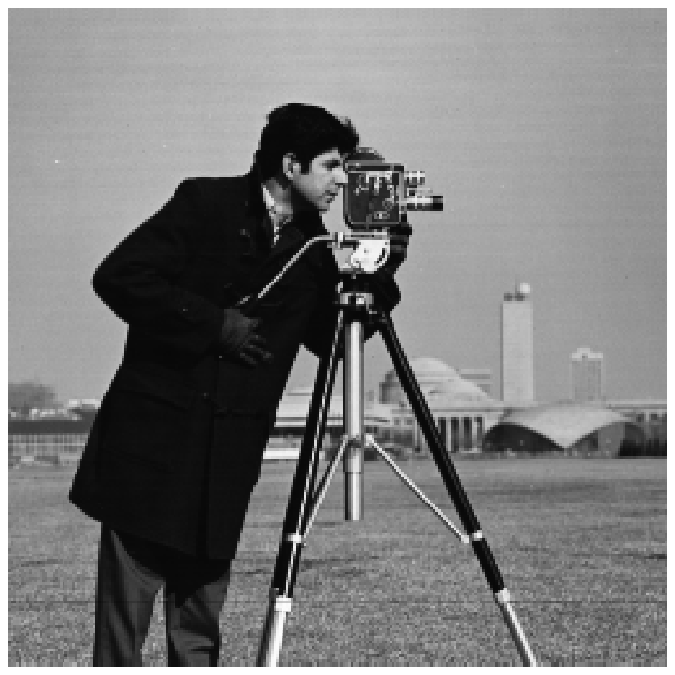}
		\vspace*{-0.8cm}
		
		\begin{center} \small \textsf{(a) Original Image}\end{center}
	\end{minipage}
	\begin{minipage}[t]{0.27\textwidth}
		\includegraphics[width=\textwidth]{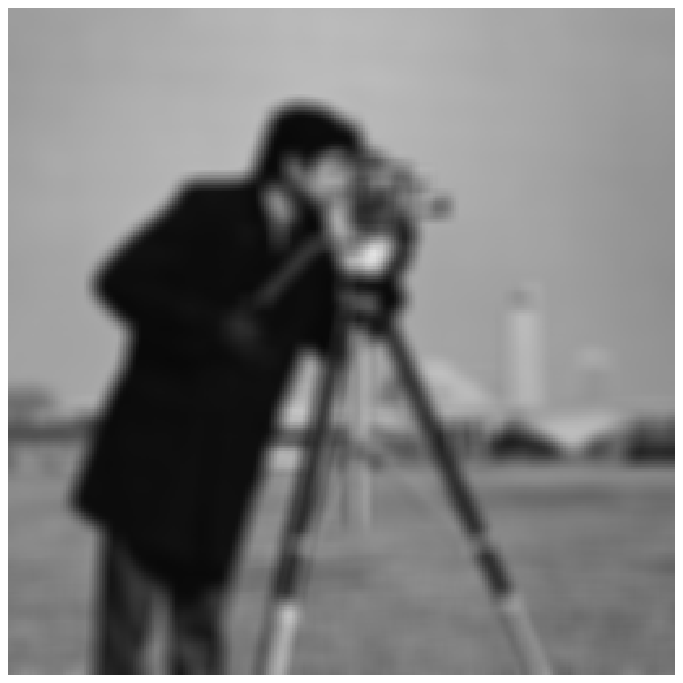}
		\vspace*{-0.8cm}
		
		\begin{center} \small \textsf{(b) Noisy Image}\end{center}
	\end{minipage}
	\begin{minipage}[t]{0.27\textwidth}
		\includegraphics[width=\textwidth]{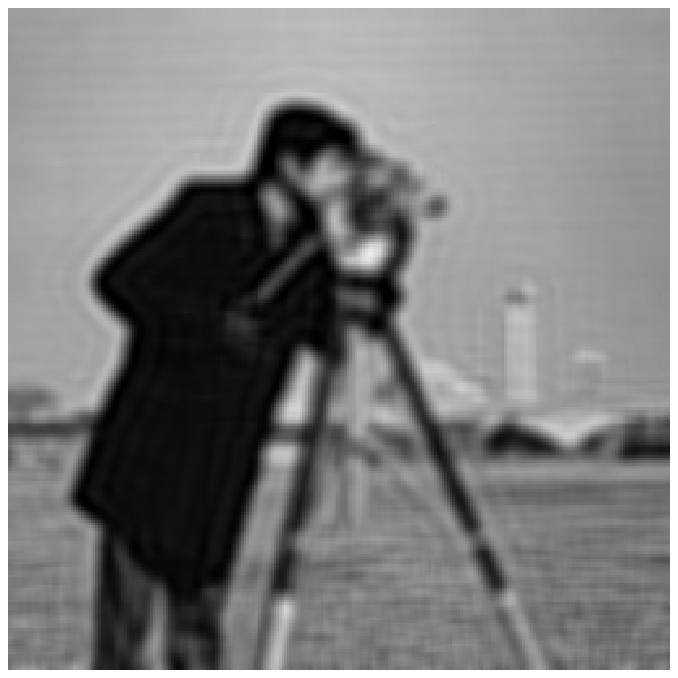}
		\vspace*{-0.8cm}
		
		\begin{center} \small \textsf{(c) SNF}\end{center}
	\end{minipage}
	
	\begin{minipage}[t]{0.27\textwidth}
		\includegraphics[width=\textwidth]{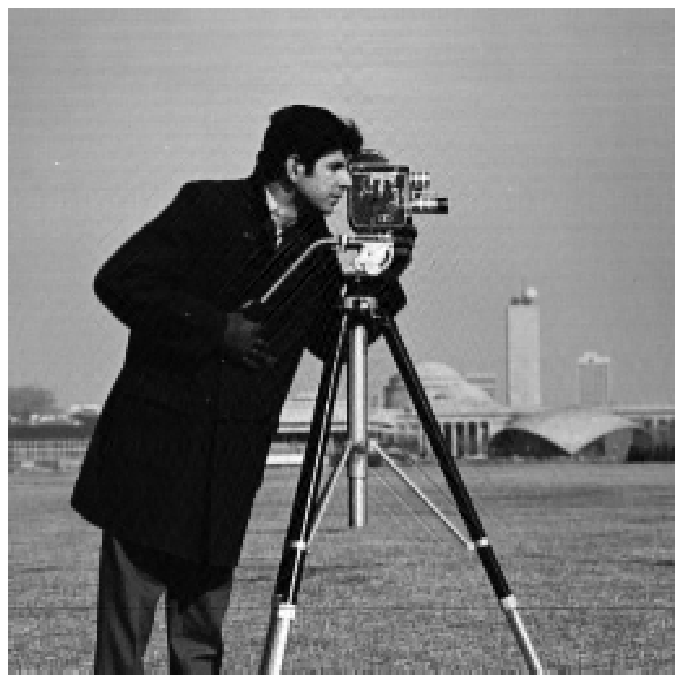}
		\vspace*{-0.8cm}
		
		\begin{center} \small \textsf{(d) RPQN}\end{center}
	\end{minipage}
	\begin{minipage}[t]{0.27\textwidth}
		\includegraphics[width=\textwidth]{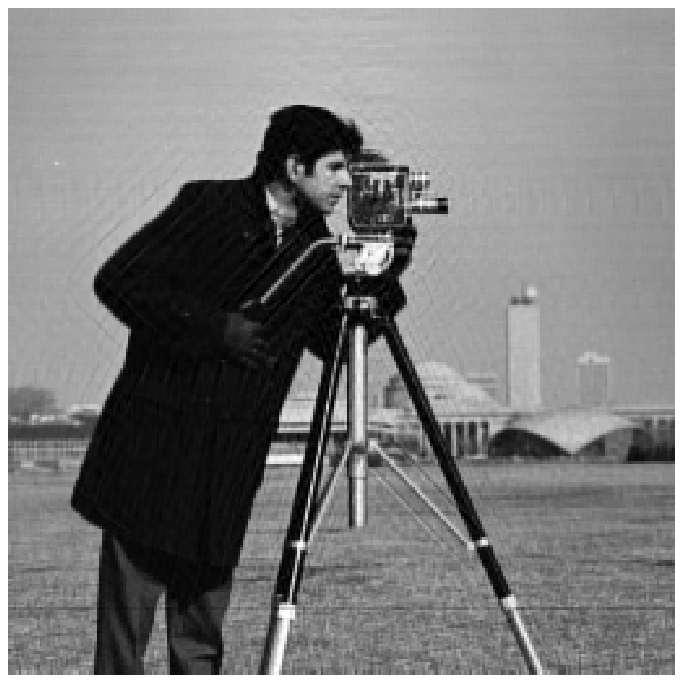}
		\vspace*{-0.8cm}
		
		\begin{center} \small \textsf{(e) QGPN}\end{center}
	\end{minipage}
	\begin{minipage}[t]{0.27\textwidth}
		\includegraphics[width=\textwidth]{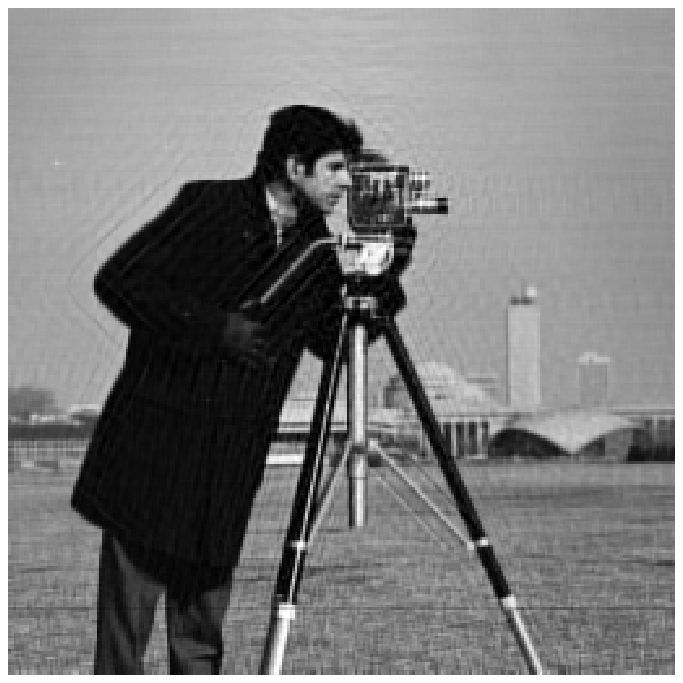}
		\vspace*{-0.8cm}
		
		\begin{center} \small \textsf{(f) SpaRSA}\end{center}
	\end{minipage}
	\caption[caption]{\small \textsf{Nonconvex image restoration from Section \ref{sec:numexample3}: Original and noisy image and recovered images using the stated algorithms and terminated after a computation time of 12 seconds.}}
	\label{fig:cameraman_results}
	\end{center}
\end{figure}

Here, we do not compute $\psi^*$ as the optimal value of the objective function, but as the function value of the original image (which are not the same in this case). For that reason, we terminate the methods if $\psi(x^k)\leq \psi(x^*)$ holds for an iterate $x^k$. The results, again averaged over 10 runs, are shown in Figure~\ref{fig:num_vergleich23} (b). For the first iterations, all methods show similar performance and there are only minor differences. At some point, however, RQPN and shortly after QGPN instantly satisfy the termination criterion, whereas SpaRSA performs several more iterations until this goal is reached. Note that the performance of SNF is not satisfactory in this example and not shown in Figure \ref{fig:num_vergleich23} (b). In the nonconvex setting, this might be due to the structure, where semismooth iterations reducing $\|r(x^k)\|$ but probably increasing $\psi(x^k)$ and proximal gradient iterations, which decrease $\psi(x^k)$ but probably increase $\|r(x^k)\|$ are expected to alternate.
We report some of the resulting data in Table \ref{60-tab:image-data}.

Looking at the performance in Figure \ref{fig:num_vergleich23} (b), we also display the resulting images of the tested methods after a computation time of 12 seconds (and not using the above termination criterion) in Figure \ref{fig:cameraman_results}. It can be observed that RPQN and QGPN restore the image relatively well, while the result of SpaRSA is also satisfactory, but SNF is clearly outperformed.

\section{Final Remarks}\label{sec:conclusion}

In this paper, we proposed a proximal quasi-Newton method with a regularization technique for a globalization, and presented the corresponding global convergence theory. After that we described a very efficient method for the computation of the occurring proximity operators using compact representations of limited memory quasi-Newton matrices. The numerical results show that the regularized method in combination with the efficient proximity operator computation accelerates the performance and outperforms both some standard first-order and some second-order methods.

Since our focus was on the limited memory quasi-Newton approach, we only presented a global convergence theory. A future approach is therefore to develop local convergence results  under appropriate assumptions including a convergence assumption on the matrices $B_k$.

Furthermore, a main issue is the assumption that the convex function $\varphi$ is real-valued,
and this fact is exploited in several steps of the current analysis.
In the authors' opinion, the deduced algorithm should perform well also for problems with extended-valued functions $\varphi$. Thus, a main task of future research is the investigation of the convergence theory for this class of functions.

Finally, the computation of the variable metric proximity operators can be investigated. Many authors \cite{yue2019family,ghanbari2018proximal,scheinberg2016practical,bonettini2017convergence} provide convergence results for inexact solutions of this problem in the setting of their proposed methods.  Although our experiments reach very high accuracies in solving the subproblems within a very few steps, an improvement of the presented method could be to connect it to some of these criteria.

	\bibliographystyle{siamplain}
	\small
	\bibliography{literatur_komplett_210801}

\end{document}